 \newtheorem{thm}{Teorema}[section]
 \newtheorem{cor}[thm]{Corolary}
 \newtheorem{lem}[thm]{Lemma}
 \newtheorem{prop}[thm]{Proposition}
 \theoremstyle{definition}
 \newtheorem{exa}[thm]{Example}
 \newtheorem{defn}[thm]{Definition}
 \theoremstyle{remark}
 \newtheorem{rem}[thm]{Remark}
 \numberwithin{equation}{section}
 \DeclareMathOperator{\IM}{Im}
 \newcommand{\abs}[1]{\left\vert#1\right\vert}
\newcommand{\C}{{\mathbb C}}  \newcommand{\N}{{\mathbb N}}   \newcommand{\R}{{\mathbb R}}  \newcommand{\B}{{\mathbb B}}
\newcommand{\CB}{\mathcal B} \newcommand{\CF}{\mathcal F} \newcommand{\CH}{\mathcal H}
\newcommand{\CK}{\mathcal K} \newcommand{\CP}{\mathcal P}  \newcommand{\CW}{\mathcal W}
   \DeclareMathOperator{\Ker}{Ker}
   \DeclareMathOperator{\Aut}{Aut}
\begin{document}

\title{Parametrization of representations of braid groups}

\author{Claudia Mar\'{i}a Egea, Esther Galina }

\address{Facultad de Matem\'atica Astronom\'{i}a y F\'{i}sica, Universidad Nacional de C\'ordoba, C\'ordoba, Argentina}

\email{cegea@mate.uncor.edu, galina@mate.uncor.edu }

\thanks{This work was partially supported by CONICET, SECYT-UNC, FONCYT}

\begin{abstract}
 We give a method to produce representations of the braid group $\B_n$ of
$n-1$  generators ($n\leq \infty$). Moreover,  we give sufficient conditions over a non unitary representation for being of this type. This
method produces examples of irreducible representations of finite and infinite dimension. \end{abstract}

\subjclass{ Primary: 20C99; Secondary: 20F36} \keywords{Braid Group; Representations; Systems of imprimitivity.}

\maketitle \section{Introduction}\label{Introduccion}

The braid group of $n$ string, $\B_n$, is defined by generators and relations as follows $$ \B_{n}=<\tau_{1}, \dots,\tau_{n-1}>_{/\sim} $$

$$ \sim=\{ \tau_k\tau_j=\tau_j\tau_k, \textrm{ if }|k- j|>1;  \ \
        \tau_k\tau_{k+1}\tau_k=\tau_{k+1}\tau_{k}\tau_{k+1},  \ \ 1\leq k\leq n-2\}
$$

We will consider non unitary representations of the braid groups on separable Hilbert spaces, they are homomorphisms of groups
$\psi:\B_n\rightarrow \CB(\CH)$, where  $\CB(\CH)$ is the set of bounded linear operators on a Hilbert space $\CH$. In this context we will
admit $n=\infty$, where $\B_{\infty}$ is the braid group with infinity many generators.

One of the purpose of this work is to obtain non unitary representations of a locally compact group $G$ which satisfies the second axiom of
countability. Given a $5$-tuple $(\pi, X, \mu, \nu, U)$, where $(X, \mu, \nu)$ define a direct integral Hilbert space $\CH$, $\pi$ is an action
of $G$ on $X$, the measure $\mu$ and the function $\nu$ satisfy certain compatibility with the action $\pi$ and $U$ is a cocycle on $\CB(\CH$)
that verifies some relations (definition \ref{cocycle}), then we define a non unitary representation $\phi_{(\pi, X, \mu, \nu, U)}$ of $G$.

In the particular case of the braid group, we can also establish sufficient conditions  over a non unitary representation to be of this type.
That is, given a representation $\rho$ which satisfies certain conditions we present a $5$-tuples $(\pi, X, \mu, \nu, U)$ such that $\rho
\approx \phi_{(\pi, X, \mu, \nu, U)}$. Therefore, we parametrize a family of representations by $5$-tuples. This family is really huge.

A similar parametrization was obtained by Varadarajan through of {\it{systems of imprimitivity}}. The systematic development of this concept is
due to Mackey and he applies his results to the notion of induced representations. The reader can consult Chapter VI of \cite{V} for references
and comments. A system of imprimitivity is a pair $(\rho, \CP)$, where $\rho$ is a unitary representation over a separable Hilbert space $\CH$
and $\CP:=\{P_{\alpha}: \alpha\in A\}$ is a family of projections such that $$ \rho(g) P_{\alpha} \rho(g)^{-1}=P_{g\cdot \alpha}$$ Varadarajan
shows that given a system of imprimitivity there exist certain parameters, equivalents to our $5$-tuple, such that $\rho$ is like $\phi_{(\pi,
X, \mu, \nu, U)}$. But from the properties of a non unitary representation $\rho$ in the family that we parametrize, we propose a set of
projections $\CP$, that depends on $\rho$, such that $(\rho, \CP)$ can be a system of imprimitivity.

The family of representations that we consider is the following. Let $\psi$ be a representation of $\B_n$, $n\leq \infty$, over a separable
Hilbert space $\CH$ such that the family  $\CF=\{\psi(\tau_k)
 \psi(\tau_k)^* : k=1, \dots, n-1\}$ is commutative and each operator of $\CF$ has
 discrete spectral decomposition, that is
$$ \psi(\tau_k) \psi(\tau_k)^*=\sum_{l\in I_k} \lambda_{k,l} P_{k,l} $$ for some projections $P_{k,l}$ and complex numbers $\lambda_{k,l}$. Note
that if $\dim \CH< \infty$, the last condition is trivially satisfied. At infinite dimension, if the operator $\psi(\tau_k)$ is compact it is
satisfied.

Let $N$ be the von Neumann algebra generated by $\CF$. As $\CF$ is a commutative set, then $N$ is a commutative algebra and the space $\CH$ can
be seen as a direct integral Hilbert space (see section \ref{Notacion}). We also impose that $\psi(\tau_k) P_{j,l} \psi(\tau_k)^{-1}\in N$, for
all spectral projection $P_{j,l}$ associated to the operator $\psi(\tau_j) \psi(\tau_j)^*$. These relations say that there exists an action
$\pi$ of $\B_n$ on $N$. This action induce one of $\B_n$ on $\CH$ that permits us to give an expression for each $\psi(\tau_k)$ in terms of a
$5$-tuple $(\pi, X, \mu, \nu, U)$.

The strategy of using the theory of commutative von Neumann algebras to parametrize representations  has been used by G$\ddot{\rm{a}}$rding and
Wightman to parametrize the representations of commutation and anti-commutation relations \cite{GW1}, \cite{GW2}. In \cite{G} the reader can
find more details. It has  been also used by Galina, Kaplan and Saal to classify real, complex or quaternionc representations of the Clifford
algebras associated to infinity dimensional vector spaces \cite{GKS1}, \cite{GKS2}.

We give explicitly the $5$-tuple associated to some well known representations as the standard representation \cite{TYM} or \cite{S} and some
local representations \cite{AG}, \cite{AS} (the ones obtained from solutions of the Yang-Baxter equation, see section \ref{Ejemplos}). In the
same way, the finite dimensional representations given in \cite{EG} can be obtained by $5$-tuples.

This work is divided in $6$ sections. In the section $2$, we give some basic results and notation of von Neumann algebras. The main result of
this section is theorem \ref{teorema 1} that gives a method to obtain a non unitary representation of a locally compact group $G$ from a
$5$-tuple $(\pi, X, \mu, \nu, U)$. In section $3$, we prove the theorem that parametrize the representations of the braid group mentioned before
through of $5$-tuples, using the results of von Neumann algebras theory. In section $4$, we present the corresponding parameters of some well
known representations as the standard representation and diagonal local representations. In this way, we can generalize these classical
representations of $\B_n$ to representations of $\B_{\infty}$. In section $5$ we analyze some relations between the measure $\mu$ and the action
of the group $\B_n$ on $X$. In particular, for $n=\infty$, we show that some non discrete measure, as product measure or Lebesgue measure for
the interval $[0,1]$, can be considered. Finally, in section $6$ we give some conditions in the $5$-tuple to guarantee the irreducibility of the
associated representation.

\section*{Acknowledgment} The authors thanks to Nicol\'as Andruskiewitsch for helpful discussions.

\section{A way to obtain non unitary representations}\label{Notacion} In this section we first revise some concepts and we fix notation. We give
a method to obtain non unitary representations of a locally compact group $G$ from a specific data.

\begin{defn} Let $(X,\mu)$ be a measure space, that is, $X$ is a set with a $\sigma$-algebra of subsets and $\mu$ is a non-negative completely
additive set function defined on the $\sigma$-algebra. Let $\nu: X\rightarrow \N \cup \{\infty\}$ a measurable function, $\nu$ divides the set
$X$ into disjoints measurable subsets $X_m=\{x\in X :\nu(x)=m\}$, where
 $m\in \N \cup \{\infty\}$. For each $x\in X$, let $m:=\nu(x)$, and let $\CH_x:= \CH_m$ a fixed $m$-dimensional separable Hilbert space.

Let $\CH$ be the space consisting of measurable vector functions $f$, $x\rightarrow f(x)$, such that $f(x)\in \CH_x$ for almost all $x\in X$ and
$ \int_X \|f(x)\|^2 d\mu <\infty $. A vector function $f$ is measurable if for each $h\in \CH_m$ the scalar function $\alpha (x)=<f(x), h>$ is
measurable over $X_m$. In $\CH$ we consider the addition, multiplication by scalars and the scalar product respectively defined by $$
x\rightarrow f(x) +g(x), \ \ \ \ \  x\rightarrow \lambda f(x), \ \ \ \ \ <f,g>=\int_X <f(x), g(x)>d\mu(x). $$

Then $\CH$ is a Hilbert space called the {\it{Direct Integral of the Hilbert space $\{\CH_x\}$ over $(X,\mu)$}}. We write $\CH=\int_X^{\oplus}
H_x d\mu (x)$. \end{defn}

The function $\nu$ is called {\it{dimension function}}. Sometimes, we will identify the Hilbert space $\CH$ with the triple $(X, \mu, \nu)$.

\begin{exa} If $X=\N$ with the measure that assigns to each set its cardinal, the direct integral is just the direct sum of Hilbert space of the
family $\{\CH_n\}$. \end{exa}

\begin{exa} If $(X,\mu)$ is any measure space and $\CH_x=\C$ for each $x\in X$, then the direct integral is $L^2(X,\mu)$ the space of square
integrable functions on $X$. \end{exa}

\begin{defn}\label{diago} Let $\CH=\int_X^{\oplus} \CH_x d\mu (x)$. A bounded linear operator $T$ over $\CH$ is said to be {\it{decomposable}}
if there is a function $x\rightarrow T(x)$ over $X$ such that $T(x)$ is a bounded linear operator of $\CH_x$, and for each $f\in \CH$,
$(Tf)(x)=T(x) f(x)$ for almost every $x\in X$.

If, in addition, each $T(x)$ is a multiple of the identity operator of $\CH_x$, we say that $T$ is {\it{diagonalizable}}. \end{defn}

If $\CH=\int_X^{\oplus} H_x d\mu (x)$, for each essentially bounded measurable function $\varphi \in L^\infty(X,\mu) $, there is a
diagonalizable operator on $\CH$ given by $$ (M_{\varphi} f)(x)= \varphi (x) f(x) $$

The set of operators $M_{\varphi}$ form a weakly closed commutative $*$-subalgebra of $\CB(\CH)$.

\begin{defn} A {\it{von Neumann algebra}} is a weakly closed $*$-subalgebra  of $\CB(\CH)$ for some Hilbert space $\CH$, that contains the
identity operator of $\CH$. \end{defn}

With this definition we may rewrite the last statement in the following way. \begin{rem} \label{integral directa} If the Hilbert space $\CH$  is
the direct integral of the Hilbert spaces $\CH_x$, then the subalgebra $\{M_{\varphi}\}$ of $\CB(\CH)$ is a commutative von Neumann algebra.
Moreover, the converse holds too. For all commutative von Neumann algebra in a separable Hilbert space $\CH$, there is a measure space $(X,
\mu)$ and a measurable family of Hilbert spaces $\{\CH_x\}$ such that $\CH= \int_X^{\oplus} \CH_x d\mu(x)$ and each element of the algebra has
the form $M_{\varphi}$ for some essentially bounded function $\varphi$ (see by example \cite{vN} or theorem 14.2 of \cite{KR}). \end{rem} \

Let $G$ be a locally compact group. We want to define a representation of $G$ over $\CH$ the direct integral Hilbert space associated to $(X,
\mu, \nu)$. Let $\pi: G \times X \rightarrow X$ be an action of $G$ on $X$, that is, for each $g\in G$, $\pi(g)$ is a continuous function, and
$\pi(gh)=\pi(g) \pi(h)$ for all $g,h \in G$. Suppose that the measure $\mu$ and the dimension function $\nu$ verify the following
compatibilities with $\pi$. \begin{defn} The measure $\mu$ is said {\it{$\pi$-quasi-invariant}} if the measures $\mu(x)$ and $\mu(\pi(g)x)$ have
the same set of measure zero, for all $g\in G$. That is, $\mu(E)=0$ if and only if $\mu(\pi(g)E)=0$, for all $g\in G$.

If $\mu(\pi(g)E)=\mu(E)$ for all $E$ subset of $X$ and for all $g\in G$, then the measure $\mu$ is said {\it{invariant}}. \end{defn}

\begin{defn} The dimension function $\nu(x)$ is said {\it{$\pi$-invariant}} if $\nu(x)=\nu(\pi(g)x)$, for all $g\in G$, and for almost every
$x\in X$. \end{defn}

We need just one more ingredient. Given $\CH$ a Hilbert space, then $\CB(\CH)$ is a Borel space with the smallest $\sigma$-algebra of measurable
subsets which makes all the maps $A\rightarrow <Af, h>$  measurable, for all $f,h\in \CH$. \begin{defn}\label{cocycle} Let $G$ be a locally
compact group, and $\CH$ a direct integral Hilbert space associated to the triple $(X, \mu, \nu)$. Let $\pi$ be an action of $G$ on $X$, such
that $\mu$ is $\pi$-quasi-invariant and $\nu$ is $\pi$-invariant. A {\it{$(G,X,\CH)$-cocycle relative to $\mu$}} is a function $U:G\times
X\rightarrow \CB(\CH)$ which satisfies the following properties: \begin{enumerate} \item $U$ is a Borel map, \item for each $g\in G$, $U(g,.)$
is a decomposable bounded operator of $\CB(\CH)$, with decomposition function $x\rightarrow U(g,x)$, \item  $U(1,x)=1_{\CH} $, for almost all
$x\in X$, \item  $U(g_1g_2,x)=U(g_1,\pi(g_2)x)U(g_2,x)$ for almost all $x\in X$. \end{enumerate} \end{defn}


\begin{thm}\label{teorema 1} Let $G$ be a locally compact group and let $(\pi, X, \mu, \nu, U)$ be a $5$-tuple where \begin{enumerate} \item
$\pi$ is an action of $G$ on $X$; \item $(X, \mu)$ is a measure space with $\mu$ a $\pi$-quasi-invariant measure; \item $\nu: X\rightarrow \N
\cup \{\infty\}$ is a $\pi$-invariant measurable function; \item $U: G\times X\rightarrow \CB(\CH)$ is a cocycle relative to $\mu$, where $\CH$
is the direct integral Hilbert space associated to $(X, \mu, \nu)$. \end{enumerate} Then $$ \phi=\phi_{(\pi, X, \mu, \nu, U)}: G \rightarrow
\CB(\CH) $$ defined by \begin{equation} \label{representacion} (\phi(g) f)(x)=\sqrt{\frac{d\mu (\pi(g^{-1})x)}{d\mu(x)}}U(g,\pi(g^{-1})x)
f(\pi(g^{-1})x) \end{equation} is a representation of $G$ on $\CH$. \end{thm}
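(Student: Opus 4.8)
This breaks into three tasks: (i) check that $\phi(g)$ is a well-defined bounded operator on $\CH$ for each $g$; (ii) check that $\phi$ is multiplicative, $\phi(g_1)\phi(g_2)=\phi(g_1g_2)$; and (iii) check $\phi(1)=1_{\CH}$. Task (iii) is immediate from the cocycle condition $U(1,x)=1_{\CH}$ together with $\mu(\pi(1^{-1})x)=\mu(x)$, so the Radon–Nikodym factor is $1$. The main work lies in (i) and (ii), and the multiplicativity in (ii) is where the design of the formula, with its square-root of a Radon–Nikodym derivative, really pays off.

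**For task (i)**, I would first confirm that $\phi(g)f$ lands in $\CH$. Because $\nu$ is $\pi$-invariant, $\nu(\pi(g^{-1})x)=\nu(x)$ for a.e.\ $x$, so $f(\pi(g^{-1})x)\in\CH_{\nu(\pi(g^{-1})x)}=\CH_{\nu(x)}=\CH_x$, and since $U(g,\cdot)$ is decomposable, $U(g,\pi(g^{-1})x)$ acts on $\CH_x$; thus the right-hand side is a vector in $\CH_x$ for a.e.\ $x$, and measurability follows from the Borel hypotheses on $U$ and $\pi$. To see $\phi(g)f\in\CH$ and that $\phi(g)$ is bounded, I compute the norm using the $\pi$-quasi-invariance: substituting $y=\pi(g^{-1})x$ in $\int_X \|(\phi(g)f)(x)\|^2\,d\mu(x)$, the factor $\frac{d\mu(\pi(g^{-1})x)}{d\mu(x)}$ is exactly the Jacobian that turns $d\mu(x)$ into $d\mu(\pi(g^{-1})x)$, so the integral becomes $\int_X \|U(g,y)f(y)\|^2\,d\mu(y)$, which is bounded by $\bigl(\operatorname*{ess\,sup}_y\|U(g,y)\|\bigr)^2\|f\|^2$. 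This gives boundedness (the essential supremum is finite since $U(g,\cdot)$ is a bounded operator) and shows the Radon–Nikodym weight is precisely what makes $\phi(g)$ close to isometric.

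**For task (ii)**, I would apply $\phi(g_1)$ to $\phi(g_2)f$ and unwind the definition. Two ingredients combine: the chain rule for Radon–Nikodym derivatives, $\frac{d\mu(\pi((g_1g_2)^{-1})x)}{d\mu(x)} = \frac{d\mu(\pi(g_2^{-1})\pi(g_1^{-1})x)}{d\mu(\pi(g_1^{-1})x)}\cdot\frac{d\mu(\pi(g_1^{-1})x)}{d\mu(x)}$, which reassembles the two square-root factors into the single factor appearing in $\phi(g_1g_2)$; and the cocycle identity $U(g_1g_2,x)=U(g_1,\pi(g_2)x)U(g_2,x)$, evaluated at the point $x\mapsto\pi((g_1g_2)^{-1})x$, which matches the composition of the two operator factors $U(g_1,\pi(g_1^{-1})x)$ and $U(g_2,\pi(g_2^{-1})\pi(g_1^{-1})x)$ once one uses $\pi(g_2^{-1})\pi(g_1^{-1})=\pi((g_1g_2)^{-1})$. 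Carefully tracking the argument of $f$ through both substitutions shows it lands at $\pi((g_1g_2)^{-1})x$, as required.

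**The main obstacle is bookkeeping rather than depth:** keeping the arguments of $\mu$, $U$, and $f$ synchronized through the composition, and ensuring every a.e.\ identity (from quasi-invariance, from $\nu$-invariance, and from the cocycle relations holding only almost everywhere) is invoked on a co-null set whose intersection remains co-null. I would also note the subtlety that the chain rule for the Radon–Nikodym derivatives holds $\mu$-a.e.\ precisely because $\mu$ is $\pi$-quasi-invariant, so all three weight functions are genuine densities; this is the one place where the quasi-invariance hypothesis is essential and must be cited explicitly.
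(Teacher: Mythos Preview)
Your algebraic verification of multiplicativity is exactly what the paper does: both sides are expanded, and the equality follows from the chain rule for Radon--Nikodym derivatives together with the cocycle identity $U(g_1g_2,x)=U(g_1,\pi(g_2)x)U(g_2,x)$ evaluated at $\pi((g_1g_2)^{-1})x$. Your task (i) on well-definedness and boundedness of $\phi(g)$ is actually more careful than the paper, which leaves this implicit.

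There is one omission relative to the paper. Since $G$ is a locally compact group, ``representation'' here includes continuity of $g\mapsto\phi(g)$, and the paper devotes a paragraph to this: it invokes Lemma~5.28 of \cite{V} to reduce continuity to measurability, and then checks that $g\mapsto\langle\phi(g)f,h\rangle$ is measurable by writing it as $\int_X\sqrt{\tfrac{d\mu(\pi(g^{-1})x)}{d\mu(x)}}\,\langle U(g,\pi(g^{-1})x)f(\pi(g^{-1})x),h(x)\rangle\,d\mu(x)$ and using that $U$ is a Borel map on $G\times X$. Your proposal treats $\phi$ purely as a group homomorphism into $\CB(\CH)$ and says nothing about its dependence on $g$; you should add this step to match the statement's hypotheses on $G$.
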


\begin{proof}   

\begin{equation} \label{repres} \phi(gh)=\phi(g) \phi(h) \end{equation}

By Lemma 5.28 of \cite{V}, $\phi$ is continuous if it is measurable. By definition of the $\sigma$-algebra of measurable subsets of $\CB(\CH)$,
it is enough to verify that for $f,h\in\CH$, the function $G\rightarrow \C$ defined by $<\phi(g)f,h>$ is measurable. But $$ \begin{aligned}
<\phi(g)f,h>&=\int_X <(\phi(g)f)(x), h(x)>d\mu(x)\\
            &=\int_X \sqrt{\frac{d\mu (\pi(g^{-1})x)}{d\mu (x)}}< U(g,\pi(g^{-1})x) f(\pi(g^{-1})x), h(x)>d\mu(x)
\end{aligned} $$ Since the integrand on the right is a Borel function on $G\times X$, because $U$ is a Borel function, and the function $\int_X
<\cdot, \cdot> d\mu(x)$ of $\CH \times \CH$ in $\R$ is a Borel function, $<\phi(g)f,h>$ is a measurable function.

Let us compute $\phi(gh)$, let $f\in \CH$ $$ \begin{aligned} (\phi(gh)f)(x)&=\sqrt{\frac{d\mu (\pi((gh)^{-1})x)}{d\mu (x)}}
U(gh,\pi((gh)^{-1})x) f(\pi((gh)^{-1})x) \\
              &=\sqrt{\frac{d\mu (\pi(h^{-1})\pi(g^{-1})x)}{d\mu (x)}}U(gh,\pi(h^{-1})\pi(g^{-1})x) f(\pi(h^{-1}) \pi(g^{-1})x)
\end{aligned} $$ On the other hand $$ \begin{aligned} (\phi(g) \phi(h) f)(x)&=\sqrt{\frac{d\mu (\pi(g^{-1})x)}{d\mu (x)}}U(g,\pi(g^{-1})x)
(\phi(h)f)(\pi(g^{-1})x) \\
                      &=\sqrt{\frac{d\mu (\pi(g^{-1})x)}{d\mu (x)}}\sqrt{\frac{d\mu
(\pi(h^{-1})\pi(g^{-1})x)}{d\mu (\pi(g^{-1})x)}}\\
                      &\qquad \qquad \qquad U(g,\pi(g^{-1})x)U(h,\pi(h^{-1})\pi(g^{-1})x) f(\pi(h^{-1}) \pi(g^{-1})x)
\end{aligned} $$ The conditions on $U$ to be a cocycle and the property of the measure $\mu$ imply that \ref{repres} is true. So, $\phi$ is a
representation of $G$ on $\CH$. \end{proof}

Note that two different tuples may define equivalent representations.

The operator $\phi(g)$ is not necessarily a unitary operator; it will be, if $U(g,.)$ is unitary. Varadarajan shows this results when the
function dimension $\nu$ is constant and the cocycle $U$ is unitary, that is $U(g,.)$ is a unitary operator for each $g\in G$ (see \cite{V},
theorem 6.7, pag 215).

 \

If $G$ is a discrete group presented by generators and relations, it is enough to define a cocycle $U$ in the generators such that they satisfy
the relations of $G$. In the case $G=\B_n$, the equations of cocycle for the generators are

\begin{equation} \label{ecuacion 1} \begin{aligned} U(\tau_k, & \pi(\tau_k^{-1})x) U(\tau_{k+1}, \pi(\tau_{k+1}^{-1})\pi(\tau_k^{-1}) x)
U(\tau_k, \pi(\tau_k^{-1})\pi(\tau_{k+1}^{-1})\pi( \tau_k^{-1}) x) = \\
   & = U(\tau_{k+1}, \pi(\tau_{k+1}^{-1})x) U(\tau_k,\pi(\tau_k^{-1})\pi(\tau_{k+1}^{-1})x)U(\tau_{k+1},\pi(\tau_{k+1}^{-1})\pi(\tau_k^{-1})\pi(
   \tau_{k+1}^{-1})x)
   \end{aligned}
\end{equation} if $1\leq k\leq n-2$ \begin{equation}\label{ecuacion 2} U(\tau_k, \pi(\tau_{k}^{-1})x)
U(\tau_j,\pi(\tau_{j}^{-1})\pi(\tau_{k}^{-1})x)= U(\tau_j, \pi(\tau_{j}^{-1})x) U(\tau_k, \pi(\tau_{k}^{-1})\pi(\tau_j^{-1})x) \end{equation}
 if $|j-k|>1$.

From now on fix the notation for $\pi_k x=\pi(\tau_k)x$. \begin{exa} \label{sencillas} Let $X=\{(x_1, \dots , x_n): x_i\in A\}$ for some set
$A\subset \N$, $n\leq \infty$, let $\pi$ the action of $\B_n$ on $X$ given by

$$ \pi_k (x_1, \dots , x_k,x_{k+1}, \dots, x_n)= (x_1, \dots ,x_{k+1}, x_k, \dots, x_n) $$

Note that $\pi_k^{-1}x =\pi_k x$. Then, the following equations on $U(\tau_k, \pi_k x)$ implies the equations \ref{ecuacion 1} and \ref{ecuacion
2}. They have a more simple expression.

\begin{enumerate} \item  $U(\tau_{k+1},\pi_{k+1} x)= U(\tau_k, \pi_k \pi_{k+1} \pi_k x)$ if $1\leq k\leq n-2$, \item  $U(\tau_k,\pi_k \pi_j x)=
U(\tau_k, \pi_j x)$ if $|j-k|>1$, \item  the operators $U(\tau_k, \pi_k x)$, $U(\tau_k, \pi_k\pi_{k+1} x)$, and $U(\tau_k, \pi_k \pi_{k+1} \pi_k
x)$ commute and $U(\tau_k, \pi_k x)$ commutes with $U(\tau_k, \pi_k \pi_j x)$. \end{enumerate}

Furthermore, in this case, $(1)$ permits us to define inductively the cocycle in the following way, we define $U(\tau_1, x)$ for all $x\in X$.
Then, we obtain $U(\tau_k, x)$, $2\leq k\leq n-1$, from $(1)$. Finally we verify the other conditions. Therefore this gives a machinery to
construct examples. \end{exa}

\begin{exa} Let $\rho$ be a representation of $\B_n$ on a vector space $V$. Consider the $5$-tuple $(\pi, X, \mu, \nu, U)$ likewise in Theorem
\ref{teorema 1}, where $\nu(x)=\dim V$ for almost all $x\in X$ and the cocycle is a constant function on $X$ defined by $U(\tau_k,x)=
\rho(\tau_k)$. Then we can generalize any representation of $\B_n$. In section \ref{irreduc} we are going to give conditions in the $5$-tuple
such that the associated representation to be irreducible.

Note that if $X=\{x_0\}$, $\mu(x_0)=1$ and $\pi$ is the trivial action of $\B_n$ on $X$, then $\phi_{(\pi, X, \mu, \nu, U)}\approx \rho$. But
this is a trivial parametrization of representations. In the next section, we are going to give a non trivial parametrization. \end{exa}

In section \ref{Ejemplos} we will give more interesting examples.

\section{Parametrization of a family of representations of $\B_n$} \label{teoFundamental}

With the notation of the previous section, we characterize a class of representations of $\B_n$ through the  $5$-tuple $(\pi, X, \mu, \nu, U)$
parametrization.

Let $\psi: \B_n \rightarrow \CB(\CH)$ be a representation of the group $\B_n$ ($n\leq \infty$), over a separable Hilbert space $\CH$. We denote
by $\psi_k:=\psi(\tau_k)$. Assume that the family $\{\psi_k \psi_k^*\}$ is a commutative family of operators with discrete spectral
decomposition, that is \begin{equation} \label{condicion (1)} \psi_k \psi_k^*=\sum_{l\in I_k} \lambda_{k,l} P_{k,l} \textrm{ \ \  for all  }  k
\end{equation}

\begin{equation} \label{condicion (2)} \psi_k \psi_k^* \psi_j \psi_j^*=\psi_j \psi_j^* \psi_k \psi_k^*, \textrm{\ \   for all  } j, k
\end{equation}

Assume that the cardinality of $I_k$ is greater than 1 for technical reasons. In particular, $\psi_k \psi_k^*$ is a non unitary operator. The second condition ensures
that the von Neumann algebra $N$ generated by the family $\widetilde{\CF}=\{\psi_k \psi_k^*\}$ is commutative. Then, by remark \ref{integral
directa}, $\CH$ decomposes in a direct integral associated to a triple $(X', \mu, \nu)$. Respect of this decomposition, each element of $N$
corresponds to a diagonalizable operator $M_{\varphi}$ of $\CB(\CH)$. In particular, each projection is exactly the operator multiplication by
the characteristic function $\chi_{X_{k,l}}$ of a measurable set $X_{k,l}$ of positive measure. That is $$ P_{k,l}=M_{\chi_{X_{k,l}}} $$

It is known that every von Neumann algebra is generated by its spectral projections. Then $N$ is generated by the spectral projections
$P_{k,l}$. So,  it is enough to consider the $\sigma$-algebra of subsets of $X'$ generated by $X_{k,l}$ with $l\in I_k$ and $k=1,\dots, n-1$.
Furthermore, since $\sum_{l\in I_k} P_{k,l}=1_{\CH}$ for each $k$, we have that $\bigcup_{l\in I_k} X_{k,l}=X'$, then each $x'\in X'$ belongs to a
set $X_{k, x_k}$ for some $k\in \{1,\dots, n-1\}$. Hence, we can replaced $X'$ by its factor set $X$, whose points are the subset of $X'$ of the
form $\cap_{k=1}^{n-1} X_{k,x_k}$. Thus $\CH_x= \overline{\cap_{k=1}^{n-1} \IM P_{k, x_k}}$.

If we assign the number $x_k$ to each set $X_{k,x_k}$, then each element $x\in X $ can be identified with the $(n-1)$-tuple $(x_1, \dots ,
x_{n-1})$ ($n$ can be $\infty$). Under this identification note that $X_{j,l}=\{(x_1, \dots, x_{n-1})\in X : x_j=l\}$. Hence, $X_{j,l}\subset X$
and the $\sigma$-algebra de subsets of $X$ and $X'$ coincide.

If the dimension of $\CH$ is finite, this reduction to the diagonal form is not more than the process of the simultaneous diagonalization of a
commutative family of operators.

On the other hand,  $\psi_k P_{j,l} \psi_k^{-1}$ is a projection, assume that it is in $N$ and denote it  by $P_{\pi_k(j,l)}$, that is $$
P_{\pi_k(j,l)}:= \psi_k P_{j,l} \psi_k^{-1} $$ Or equivalently, since $P_{j,l}=M_{\chi_{X_{j,l}}}$, we have that

\begin{equation}\label{condicion (3)}
\psi_k M_{\chi_{X_{j,l}}} \psi_k^{-1}= M_{\chi_{\pi_k(X_{j,l})}}
\end{equation}

We can define an action $\pi$ of the group $\B_n$ in the $\sigma$-algebra of the subsets of $X$ by $\pi(\tau_k):=\pi_k$, where the values in the
generators of the $\sigma$-algebra are $$ \begin{array}{lcc} \           &     \pi_k    & \\
 X_{j,l}   & \longrightarrow & \pi_k(X_{k,l})
 \end{array}
$$ Indeed, by the braid group equations we have that $$ \psi_k \psi_{k+1} \psi_k P_{j,x_j}\psi_k^{-1} \psi_{k+1}^{-1}\psi_k^{-1}=
\psi_{k+1}\psi_k \psi_{k+1}P_{j, x_j} \psi_{k+1}^{-1} \psi_k^{-1} \psi_{k+1}^{-1} $$ if $1\leq k\leq n-2$ and $$ \psi_i \psi_k P_{j, x_j}
\psi_k^{-1} \psi_i^{-1}= \psi_k \psi_i P_{j, x_j} \psi_i^{-1} \psi_k^{-1} $$ if $|i-k|>1$,  hence $$ \pi_k \pi_{k+1} \pi_k=\pi_{k+1} \pi_k
\pi_{k+1} $$ for all $k$ such that $1\leq k\leq n-2$, and $$ \pi_k \pi_j =\pi_j \pi_k $$ if $|j-k|>1$. So, the braid group $\B_n$ acts in the
$\sigma$-algebra of subsets of $X$.

\begin{lem}\label{accion} This action of $\B_n$ in the $\sigma$-algebra of sets of $X$ induces an action
 on $X$ given by
$$ \pi_k(x)=\pi_k(\cap_{j=1}^{n-1} X_{j,x_j}):=\cap_{j=1}^{n-1} \pi_k(X_{j,x_j}) $$ \end{lem}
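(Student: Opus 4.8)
The plan is to show that each $\pi_k$, viewed as a transformation of the $\sigma$-algebra of $X$, is an automorphism of the underlying measure (Boolean) algebra, and then to read off its action on the atoms, which are exactly the points of $X$. The key observation is that $\pi_k$ is induced by the algebra automorphism $\alpha_k\colon T\mapsto \psi_k T\psi_k^{-1}$ of $\CB(\CH)$, which by the imposed relation \ref{condicion (3)} carries $N$ into $N$ and is weak-$*$ continuous. Under the identification $N\cong L^\infty(X,\mu)$, a projection $P=M_{\chi_E}$ corresponds to the set $E$, and since $\chi_{E\cap F}=\chi_E\chi_F$ and $\chi_{X\setminus E}=1-\chi_E$, the multiplicativity and unitality of $\alpha_k$ yield
\begin{equation*}
\pi_k(E\cap F)=\pi_k(E)\cap\pi_k(F),\qquad \pi_k(X\setminus E)=X\setminus\pi_k(E)
\end{equation*}
for all sets $E,F$ of the algebra, modulo null sets. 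Thus $\pi_k$ preserves intersections, complements, and hence disjoint unions.

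First I would record invertibility. Since $\B_n$ already acts on the $\sigma$-algebra (the identities $\pi_k\pi_{k+1}\pi_k=\pi_{k+1}\pi_k\pi_{k+1}$ and $\pi_j\pi_k=\pi_k\pi_j$ for $|j-k|>1$ were derived above, and the action on $N$ is by automorphisms, with inverse induced by $\alpha_k^{-1}=\psi_k^{-1}(\cdot)\psi_k$), each $\pi_k$ is a Boolean automorphism of the measure algebra of $(X,\mu)$. A Boolean automorphism is order preserving in both directions, so it carries atoms bijectively to atoms. Now each family $\{X_{j,l}:l\in I_j\}$ partitions $X$, because $\sum_{l\in I_j}P_{j,l}=1_\CH$ forces the $X_{j,l}$ to be disjoint and to cover $X$; consequently the atoms of the algebra generated by the $X_{j,l}$ are precisely the nonempty intersections $\bigcap_{j}X_{j,x_j}$, i.e. the points of $X$.

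Consequently, for a point $x=\bigcap_j X_{j,x_j}$ the image $\pi_k(x)$ is again an atom, and intersection preservation gives $\pi_k(x)=\bigcap_j\pi_k(X_{j,x_j})$, which is the asserted formula and exhibits $x\mapsto\pi_k(x)$ as a well-defined self-map of $X$. That $\tau_k\mapsto\pi_k$ defines an action of $\B_n$ on $X$ then follows by restricting to atoms the relations already valid on the whole $\sigma$-algebra: the braid and commutation identities hold as equalities of Boolean automorphisms, hence as equalities of the induced point maps, and each $\pi_k$ is a bijection of $X$ with inverse induced by $\alpha_k^{-1}$.

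The delicate point is the passage from a Boolean automorphism to a genuine point transformation for which the displayed identity holds pointwise. When the measure algebra is purely atomic this is immediate, as above. In general one must invoke the \emph{point realization} of measure-algebra automorphisms on a standard measure space — available here because $\CH$ is separable, so $(X,\mu)$ may be taken standard — producing for each $k$ a $\mu$-quasi-invariant measurable bijection $\pi_k\colon X\to X$, unique modulo null sets, that implements $\alpha_k$ on $N$; the identity $\pi_k(x)=\bigcap_j\pi_k(X_{j,x_j})$ then holds for $\mu$-almost every $x$. Controlling these null-set ambiguities, and checking that the atomic description of points survives them, is the main obstacle; the rest is the routine transport of Boolean structure through the isomorphism $N\cong L^\infty(X,\mu)$.
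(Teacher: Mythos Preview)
Your argument is correct and takes a genuinely different, more structural route than the paper. The paper proves the lemma by a direct combinatorial computation: it writes each $\pi_k(X_{j,x_j})$ explicitly as a union of intersections of the generating sets $X_{r,l}$, then argues term by term that the full intersection $\bigcap_{j}\pi_k(X_{j,x_j})$ collapses to a single set of the form $\bigcap_{r}X_{r,l_r}$ (and is nonempty because $\psi_k(\wedge_j P_{j,x_j})\psi_k^{-1}\neq 0$). You bypass this entirely by recognizing that conjugation by $\psi_k$ is a normal $*$-automorphism of $N\cong L^\infty(X,\mu)$, hence induces a Boolean automorphism of the measure algebra, and such automorphisms must permute atoms; since the atoms are exactly the points of $X$ by construction, the formula follows from intersection preservation. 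Your approach is shorter and more conceptual once the Boolean-algebra language is in place, and it makes transparent why the braid relations on the $\sigma$-algebra descend to point maps. You also correctly isolate the one genuine subtlety --- the passage from a measure-algebra automorphism to a pointwise map when $\mu$ is non-atomic, handled via point realization on a standard space --- which the paper's argument sidesteps by tacitly assuming each $x$ corresponds to a nonzero projection $\wedge_j P_{j,x_j}$. The trade-off is that the paper's hands-on computation is self-contained and requires no background on Boolean automorphisms or point realization, whereas your argument leans on that machinery; for the case $n=\infty$ you should also note that preserving the countable intersection $\bigcap_{j=1}^\infty X_{j,x_j}$ uses the normality of $\alpha_k$, not just its multiplicativity.
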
 \begin{proof} We must see that $\cap_{j=1}^{n-1}
\pi_k(X_{j,x_j})\neq\O$ and that $\pi_k(x)$ is an element of $X$, this is $\cap_{j=1}^{n-1} \pi_k(X_{j,x_j})= \cap_{j=1}^{n-1} X_{j,l_j}$ for
some $l_j \in I_j$.

In fact, $x$ is identified with the set $\cap_{j=1}^{n-1} X_{j,x_j}$ which is associated to the non-zero projection $\wedge_{j=1}^{n-1}
P_{j,x_j}$ (indeed, if $P$ and $Q$ are projections, $P\wedge Q$ denotes the orthogonal projection onto $\overline{\IM P \cap \IM Q})$. In the
same way, $\cap_{j=1}^{n-1} \pi_k(X_{j,x_j})$ is associated to the operator $\wedge_{j=1}^{n-1}( \psi(\tau_k) P_{j,x_j} \psi(\tau_k^{-1}))=
\psi(\tau_k)( \wedge_{j=1}^{n-1} P_{j,x_j}) \psi(\tau_k^{-1})$. It is non zero since $\psi(\tau_k)$ is invertible. Hence, $\cap_{j=1}^{n-1}
\pi_k(X_{j,x_j})\neq\O $.

Note that for each $j$ and $r$, $1\leq j, r\leq n-1$, there exists $l_r\in I_r$ such that $X_{r,l_r}\cap \pi_k(X_{j,x_j})\neq \O $. In fact, for
each $r$, $\sum_{l\in I_r} P_{r,l}=1_{\CH}$. Hence $\psi(\tau_k)P_{j, x_j} \psi(\tau_k^{-1})=\sum_{l\in I_r} \psi(\tau_k)P_{j, x_j}
\psi(\tau_k^{-1})\wedge P_{r,l}$. As left side is a non-zero projection, there exists $l_r\in I_r$ such that $\psi(\tau_k)P_{j, x_j}
\psi(\tau_k^{-1})\wedge P_{r,l}$ is non-zero. Then, $X_{r,l_r}\cap \pi_k(X_{j,x_j})$ is a measurable set with positive measure.

This says that all the sets $X_{r,l_r}$, $1\leq r\leq n-1$, appear in the expression of $\pi_k(X_{j,x_j})$ as union of intersections of elements
of the $\sigma$-algebra of subsets of $X$. That is, if $$ \pi_k(X_{j,x_j})=\bigcup_{m^j\in L^j} X_{t_1^j,m_1^j}\cap X_{t_2^j,m_2^j}\cap \dots
\cap X_{t_{r^j}^j,m_{r^j}^j}\cap \dots $$ where $1\leq j \leq n-1$ and $m^j= (m^j_1, m^j_2,\dots, m_{r^j}^j, \dots)$, then for each $r$ there
exists $i$ such that $t_i^j=r$ and $m_i^j=l_r$.

Note that $l_r$ can depend on $j$, but we can choose it such that it does not depend on $j$. In fact, we want to see that for each $r$ there
exists $l_r\in I_r$ such that for all $j$, $1\leq j\leq n-1$, $X_{r,l_r}\cap \pi_k(X_{j,x_j})\neq {\O}$. Suppose that for all $l\in I_r$, there
exists $j'$ such that $X_{r,l}\cap \pi_k(X_{j',x_{j'}})={\O}$. Hence, $$ {\O}=\cup_{l\in I_r} (X_{r,l}\cap \pi_k(X_{j',x_{j'}}))=(\cup_{l\in I_r}
X_{r,l})\cap \pi_k(X_{j',x_{j'}})= X\cap \pi_k(X_{j',x_{j'}}) $$ which is a contradiction.

Compute $\cap_{j=1}^{n-1} \pi_k(X_{j,x_j})$, $$ \begin{aligned} &\bigcap_{j=1}^{n-1} \pi_k(X_{j,x_j})=\bigcap_{j=1}^{n-1} (\bigcup_{m^j\in L^j}
X_{t_1^j,m_1^j}\cap X_{t_2^j,m_2^j}\cap \dots \cap X_{t_{r^j}^j,m_{r^j}^j}\cap \dots)\\
                       &=\bigcup_{m^1\in L^1, \dots,m^{n-1}\in L^{n-1}}\left(X_{t_1^1,m_1^1}\cap
X_{t_2^1,m_2^1}\cap \dots \cap X_{t_{r^{1}}^1,m_{r^{1}}^1} \cap \dots\cap X_{t_1^k,m_1^k}\cap X_{t_2^k,m_2^k}\cap \right.\\
                        &\qquad\qquad\left.\cap\dots \cap X_{t_{r^k}^k,m_{r^k}^k} \cap\dots \cap
X_{t_1^{n-1},m_1^{n-1}}\cap X_{t_2^{n-1},m_2^{n-1}}\cap\dots \cap X_{t_{r^{n-1}}^{n-1},m_{r^{n-1}}^{n-1}}\cap \dots \right) \end{aligned} $$ But
if in a term of the union
 \begin{equation} \label{1}
\begin{aligned} X_{t_1^1,m_1^1}&\cap X_{t_2^1,m_2^1}\cap \dots \cap X_{t_{r^{1}}^1,m_{r^{1}}^1} \cap \dots\cap  X_{t_1^k,m_1^k}\cap
X_{t_2^k,m_2^k}\cap\\
                        &\cap\dots \cap X_{t_{r^k}^k,m_{r^k}^k} \cap\dots \cap
X_{t_1^{n-1},m_1^{n-1}}\cap X_{t_2^{n-1},m_2^{n-1}}\cap\dots \cap X_{t_{r^{n-1}}^{n-1},m_{r^{n-1}}^{n-1}}\cap \dots \end{aligned} \end{equation}
appears the subsets $X_{t_{\alpha}^l,m_{\alpha}^l}$ and $X_{t_{\alpha}^l,m_{\beta}^l}$, with the same first subindex $t_{\alpha}^l$, then the
second ones have to be equal, $\alpha=\beta$, or the intersection \ref{1} is empty. Therefore, each term of the union is intersection at most of
$n-1$ sets, or it is empty. But the $n-1$ sets $X_{r,l_r}$ are the unique sets which appear in the decomposition of $\pi_k(X_{j,x_j})$ for all
$j$. Then, they appear in each not empty term of the union.

Therefore $ \bigcap_{j=1}^{n-1} \pi_k(X_{j,x_j})=\cap_{r=1}^{n-1} X_{r,l_r}$ or it is empty. But we have already seen that it is non empty, so
$\pi_k(x)$ is well defined. \end{proof}

Under these conditions we can establish the following parametrization

\begin{thm}\label{teorema 2} Let $\psi:\B_n\rightarrow \CB(\CH)$ be a representation of the braid group $\mathcal{B}_n$ ($n\leq \infty$)
over a separable Hilbert space $\CH$, that satisfies the following conditions, for all $k,j$
\begin{enumerate}
\item $\psi_k \psi_k^*$ has discrete spectral decomposition such that $\psi_k \psi_k^*\neq \lambda 1_\CH$;
\item $\psi_k\psi_k^*\psi_j\psi_j^*=\psi_j\psi_j^*\psi_k\psi_k^*$;
\item if $N$ is the von Neumann algebra generated by $$\CF=\{\psi_k\psi_k^*: k=1, \dots, n-1\}$$
then  $\psi_k P \psi_k^{-1}\in N$, for all projection $P\in N$. That is $\B_n$ act in $N$ by conjugation.
\end{enumerate} Then there exist a $5$-tuple $(\pi, X, \mu, \nu, U)$ such that $\psi=\phi_{(\pi, X, \mu, \nu, U)}$, where
\begin{enumerate}
\item[(a)] $\pi$ is an action of the braid group on $X$;
\item[(b)] $(X, \mu)$ is a measure space with $\mu$ a $\pi$-quasi-invariant measure;
\item[(c)] $\nu: X\rightarrow \mathbb{N}\cup \{\infty\}$ is a $\pi$-invariant measurable function;
\item[(d)] $U:\B_n\times X\rightarrow \CB(\CH)$ is a cocycle on $\CH$, the direct integral Hilbert space associate to $(X, \mu, \nu)$,
such that
\begin{equation} \label{ecuacion (3)}
U(\tau_k,\cdot)U(\tau_k,\cdot)^*  {\text{    is a diagonalizable operator on   }} \CH,
\end{equation}
\end{enumerate} That is
 \begin{equation} \label{represent} (\psi_k f)(x)=(\psi(\tau_k)f)(x)= \sqrt{\frac{d\mu(\pi_k^{-1}x)}{d\mu(x)}}U(\tau_k, \pi_k^{-1}x)f(\pi_k^{-1}x) \end{equation}

 Conversely each $5$-tuple with those properties defines a representation of $\B_n$ on
 $\CH=\int_X^{\oplus} \CH_x d\mu(x)$ given by \ref{represent}
where the operators $\psi_k$ satisfy the conditions (1), (2), (3).
\end{thm}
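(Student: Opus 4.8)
The plan is to exploit the structure already assembled before the statement. Conditions (1)–(2) give the commutative von Neumann algebra $N$ and the direct integral $\CH=\int_X^{\oplus}\CH_x\,d\mu(x)$ with $P_{k,l}=M_{\chi_{X_{k,l}}}$, condition (3) produces the maps $\pi_k$ on the $\sigma$-algebra, and Lemma \ref{accion} promotes these to an action $\pi$ of $\B_n$ on the points of $X$. So for the direct implication it remains to record the compatibilities and to manufacture the cocycle. Quasi-invariance of $\mu$ is immediate from \ref{condicion (3)}: since $\psi_k$ is invertible, $\psi_k M_{\chi_E}\psi_k^{-1}=M_{\chi_{\pi_k E}}$ vanishes iff $M_{\chi_E}$ does, i.e. $\mu(E)=0\iff\mu(\pi_k E)=0$. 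For $\pi$-invariance of $\nu$, the identity $\wedge_{j}(\psi_k P_{j,x_j}\psi_k^{-1})=\psi_k(\wedge_{j}P_{j,x_j})\psi_k^{-1}$ from Lemma \ref{accion} shows that $\psi_k$ conjugates the fiber projection of $x$ onto that of $\pi_k x$; conjugation by an invertible operator preserves the rank of the range, so $\nu(\pi_k x)=\nu(x)$.

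The heart of the matter is to peel $\psi_k$ into a geometric factor and a decomposable factor. Let $W_k$ implement $\pi_k$,
\[
(W_k f)(x)=\sqrt{\frac{d\mu(\pi_k^{-1}x)}{d\mu(x)}}\,f(\pi_k^{-1}x).
\]
A change of variables using quasi-invariance shows $W_k$ is unitary, and a direct computation gives $W_k M_\varphi W_k^{-1}=M_{\varphi\circ\pi_k^{-1}}$ for every $\varphi\in L^\infty(X,\mu)$. On the other hand, \ref{condicion (3)} says $\Ad\psi_k$ sends each generator $M_{\chi_{X_{j,l}}}$ of the diagonalizable algebra $\{M_\varphi\}$ to $M_{\chi_{\pi_k X_{j,l}}}$; since the $X_{j,l}$ generate the $\sigma$-algebra and $\pi_k$ is a measurable automorphism of $X$, $\Ad\psi_k$ preserves $\{M_\varphi\}$ and agrees there with $\Ad W_k$. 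Hence $W_k^{-1}\psi_k$ commutes with every diagonalizable operator, so it is decomposable; I define $U(\tau_k,\cdot)$ to be its decomposition field, giving $\psi_k=W_k M_{U(\tau_k,\cdot)}$, which is exactly formula \ref{represent}. I expect this step — verifying that $W_k^{-1}\psi_k$ is genuinely decomposable with a Borel operator field, which forces the passage from the generating sets $X_{j,l}$ to all of $\{M_\varphi\}$ and a measurable selection of the field — to be the main obstacle.

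With $U$ defined on the generators I would extend it to all of $\B_n$ simultaneously by letting $U(g,\cdot)$ be the decomposition field of $W_g^{-1}\psi(g)$, where $g\mapsto W_g$ is the representation furnished by Theorem \ref{teorema 1} with the trivial cocycle (so $W_{\tau_k}=W_k$ and $W_{g_1g_2}=W_{g_1}W_{g_2}$). The same normalization argument shows each $W_g^{-1}\psi(g)$ is decomposable, and then the cocycle identity $U(g_1g_2,x)=U(g_1,\pi(g_2)x)U(g_2,x)$ of Definition \ref{cocycle} drops out of $\psi(g_1g_2)=\psi(g_1)\psi(g_2)$, $W_{g_1g_2}=W_{g_1}W_{g_2}$, and $W_g^{-1}M_\Theta W_g=M_{\Theta\circ\pi(g)}$; in particular the braid equations \ref{ecuacion 1}–\ref{ecuacion 2} need not be checked by hand. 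Property \ref{ecuacion (3)} follows because $\psi_k\psi_k^*\in N$ is diagonalizable, say $\psi_k\psi_k^*=M_\lambda$, so $U(\tau_k,\cdot)U(\tau_k,\cdot)^*$ is the field of $W_k^{-1}(\psi_k\psi_k^*)W_k=M_{\lambda\circ\pi_k}$, again diagonalizable. Matching $\phi_{(\pi,X,\mu,\nu,U)}(\tau_k)$ from \ref{representacion} with $\psi_k$ on generators, and using that both sides are homomorphisms of $\B_n$, gives $\psi=\phi_{(\pi,X,\mu,\nu,U)}$.

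For the converse I would start from a $5$-tuple satisfying (a)–(d) and set $\psi=\phi_{(\pi,X,\mu,\nu,U)}$, a representation of $\B_n$ by Theorem \ref{teorema 1}. Writing $\psi_k=W_k M_{U_k}$ with $W_k$ unitary and $U_k:=U(\tau_k,\cdot)$, condition \ref{ecuacion (3)} gives $\psi_k\psi_k^*=W_k M_{U_kU_k^*}W_k^{-1}=M_{(U_kU_k^*)\circ\pi_k^{-1}}$, a positive diagonalizable operator; its spectral projections are multiplications by characteristic functions, which furnishes the spectral decomposition (1), and since any two diagonalizable operators commute, (2) holds. Finally, the identity $\Ad\psi_k(M_\varphi)=M_{\varphi\circ\pi_k^{-1}}$ shows that $\Ad\psi_k$ carries the spectral projections of the generators $\psi_j\psi_j^*$, hence the algebra $N$ they generate, into $N$, which is condition (3).
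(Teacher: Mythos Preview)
Your proposal is correct and follows the same architecture as the paper: the commutative von Neumann algebra $N$ gives the direct integral, Lemma~\ref{accion} gives $\pi$, and the key step is that $W_k^{-1}\psi_k$ (the paper's $V_k\psi_k$) commutes with every $M_\varphi$ and is therefore decomposable, yielding $U$. Your auxiliary arguments are somewhat slicker than the paper's---you get quasi-invariance of $\mu$ directly from $M_{\chi_E}=0\iff\mu(E)=0$ (the paper instead computes $\langle f,f\rangle$ via $\psi_k^{-1}\psi_k$), you get $\pi$-invariance of $\nu$ from rank-preservation under conjugation of the fiber projection (the paper argues by contradiction with linearly independent vector functions), and you define $U(g,\cdot)$ for all $g\in\B_n$ at once so the cocycle identity follows from the homomorphism property rather than from checking \ref{ecuacion 1}--\ref{ecuacion 2} by hand---but the overall route is the same.
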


\begin{rem} The conditions \ref{condicion (3)} and $(3)$ of the theorem are equivalent because $N$ is generated by the projections $P_{j,l}$.
\end{rem} \begin{proof} We have already seen that the von Neumann algebra $N$ generated by $\CF$ decomposes $\CH$ as the direct integral of the
Hilbert space $\CH_x$ of dimension $\nu(x)$ over $(X,\mu)$. Furthermore, by the lemma \ref{accion} there is an action $\pi$ of the braid group
$\B_n$ on $X$. We denote $\pi_k:=\pi(\tau_k)$ for each $k$, $1\leq k\leq n-1$.

As $N$ is generated by the commutative family $\widetilde{\CF}=\{P_{k,l}: l\in I_k, k=1, \dots, n-1 \}$ as von Neumann algebra, every element
$M_{\varphi}$ of $N$ is strong limit of linear combinations of the elements of ${\widetilde{\CF}}$. Then, by condition (3) we have the following
relations \begin{equation}\label{propiedad (1)}
 \psi_k M_{\varphi}= M_{\varphi \circ \pi_k^{-1}} \psi_k
\end{equation}

Now, we prove the $\pi$-invariance of the dimension function $\nu$. It is enough to see that $\dim \CH_x= \dim \CH_{\pi_k (x)}$ for almost all
$x$ and all $k$. We assume that this fails, then there is $E\subseteq X_m$ a set of positive measure such that $\pi_k(E) \subseteq X_{m-1}$. Let
$x\in E$ and let $\{h_1, \dots h_m\}$  be a base of the Hilbert space $\CH_m$. Consider the following vector functions $f_i(x)=\chi_F (x) h_i$
with $i=1, \dots, m$ and $F$ an arbitrary subset of $E$ of positive measure. These functions are linearly independent in $\CH$. Let $g_i(x)=
(\psi_k f_i)(x)$, as $\psi_k$ is invertible the set $\{ g_i\}$ is also linearly independent in $\CH$.

We want to see that for almost all $y\in \pi_k(F)$, the set $\{g_i(y)\}$ is linearly independent  in $\CH_y$. Assume that there is  a set $F$ of
positive measure such that this set is linearly dependent  for almost all $y\in \pi_k (F)$. Hence, suppose that $g_m(y)=\sum_{i=1}^{m-1} a_i
g_i(y)$. Then, by the linearity of $\psi_k$, $$ (\psi_k f_m)(y)= \sum_{i=1}^{m-1} a_i (\psi_k f_i) (y)= \left(\psi_k \left(\sum_{i=1}^{m-1}
a_if_i\right)\right)(y) $$

That is, $(\psi_k (f_m- \sum_{i=1}^{m-1} a_i f_i))(y)=0$ for almost all $y\in \pi_k(F)$. Therefore $f_m- \sum_{i=1}^{m-1} a_i f_i=0$ since
$\psi_k$ is invertible and $\mu(\pi_k(F))>0$. But this contradicts the
 linear independence of $\{f_i\}$.

Therefore $\{g_i(y)\}_{i=1}^m$ is  linearly independent in $\CH_y$. On the other hand, the dimension of $\CH_y$ is $m-1$ by the choose of $E$,
then there is a contradiction. Thus, $\nu (x)= \nu (\pi_k x)$ for almost all $x\in X$.

Now, we have to see that the measure $\mu $ is $\pi$-quasi-invariant, that is that $\mu (x)$ and $\mu (\pi_k x)$ have the same set of measure
zero. We are going to prove that $\mu(E)> 0$ if and only if $\mu(\pi_k E)> 0$ for all $k$. But $X$ is the disjoint union of the sets $X_m=\{
x\in X :\nu(x)=m\}$. Then, $E=E\cap X= \bigcup_{m\leq \infty} E\cap X_m$. Therefore $\mu(E)>0$ if and only if $\mu(E\cap X_m)>0$ for some $m$.
Hence, we may suppose that $E\subseteq X_m$.

Let $\chi_E $ be the characteristic function of the set $E$, let $h$ be a unitary vector in the space $\CH_m$, and consider the vector function
$x\rightarrow f(x)=\chi_E (x) h$. Then $$ \begin{array}{ll} <f,f>  &= \int_X <f(x), f(x)> d\mu (x) \\
       &= \int_X <\chi_E(x)h,\chi_E (x) h>d\mu (x)\\
       &= \int_E <h,h> d\mu(x) = \mu(E)
\end{array} $$ On the other hand, for each $k$, $1\leq k \leq n-1$ $$ \begin{array}{ll} \mu(E)=<f,f>  &= <\psi_k^{-1} \psi_k f,f>\\
              &= <\psi_k f, (\psi_k^{-1})^*f>\\
              &= <\psi_k M_{\chi_E}f,(\psi_k^{-1})^*M_{\chi_E}f> \\
              &= <M_{\chi_{\pi_k (E)}} \psi_k f, M_{\chi_{\pi_k (E)}} (\psi_k^{-1})^* f> \\
              &= \int_{\pi_k(E)} <(\psi_kf)(x),((\psi_k^{-1})^*f)(x)> d\mu (x)
\end{array} $$

Therefore, if we suppose that $\mu (E)>0$ and $\mu (\pi_k E)=0$ we obtain a contradiction. Conversely, let $E\subset X_m$. As $\nu$ is
$\pi$-invariant,  $\pi_k( E)\subset X_m$ too. We want to see that if $\mu(\pi_k E)>0$ then $\mu(E)>0$. Let $g$ be a vector function such that
$g(x)=\chi_{\pi_k (E)}(x) h$, where $h$ is a unitary vector of $\CH_m$. Hence $$<g,g> =\int_{\pi_k(E)} <h,h> d\mu(x) = \mu(\pi_k E) $$ But

$$ \begin{array}{ll} \mu(\pi_k E)=<g,g>  &= <\psi_k \psi_k^{-1} g,g>\\
              &= <\psi_k^{-1} g, \psi_k^* g>\\
              &= <\psi_k^{-1} M_{\chi_{\pi_k(E)}}g,\psi_k^*M_{\chi_{\pi_k(E)}}g> \\
              &= <M_{\chi_E} \psi_k^{-1} g, M_{\chi_E} \psi_k^* g> \\
              &= \int_{E} <(\psi_k^{-1}g)(x),(\psi_k^*g)(x)> d\mu (x)
\end{array} $$

Therefore, $\mu(\pi_k E)>0$ implies $\mu(E)>0$. Hence, the $\pi$-quasi-invariance of the measure is proved.

Consider the following operator on $\CH$, $(V_kf)(x)= \sqrt{\frac{d\mu (\pi_k x)}{d\mu (x)}} f(\pi_k x)$. Observe that $V_k$ is well defined by
the $\pi$-invariance of $\nu$ and is invertible, $(V_k^{-1}f)(x)= \sqrt{\frac{d\mu (\pi_k^{-1}x)}{d\mu (x)}}f(\pi_k^{-1}x)$. Furthermore, they
have the following property,

\begin{equation} \label{propiedad (2)}
 V_k M_{\varphi}=M_{\varphi \circ \pi_k} V_k
 \end{equation}

Let $U(\tau_k, \cdot):= V_k \psi_k$, it commutes with all the operators $M_{\varphi}$ of $N$ by \ref{propiedad (1)} and \ref{propiedad (2)}.
Then it is a decomposable operator and there exists $U(\tau_k, x):\CH_x\rightarrow \CH_x$ such that

$$ (U(\tau_k,\cdot) f)(x)=U(\tau_k, x) f(x)$$ for almost all $x$.

Then $\psi_k=V_k^{-1}U(\tau_k, \cdot)$, that is $$ \begin{aligned}
 (\psi_k f)(x)= \sqrt{\frac{d\mu (\pi_k^{-1} x)}{d\mu (x)}} U(\tau_k, \pi_k^{-1} x) f(\pi_k^{-1} x)
\end{aligned} $$

The relations of the braid group say that the operators $U(\tau_k, \cdot)$ verify the following equations, $$ \begin{aligned} U(\tau_k,
\pi_k^{-1}x) & U(\tau_{k+1}, \pi_{k+1}^{-1}\pi_k^{-1} x) U(\tau_k, \pi_k^{-1}\pi_{k+1}^{-1} \pi_k^{-1} x) = \\
   & = U(\tau_{k+1}, \pi_{k+1}^{-1}x) U(\tau_k,\pi_k^{-1}\pi_{k+1}^{-1}x)U(\tau_{k+1},\pi_{k+1}^{-1}\pi_k^{-1} \pi_{k+1}^{-1}x)
   \end{aligned}
$$ if $1\leq k\leq n-2$,

$$ U(\tau_k, \pi_{k}^{-1}x) U(\tau_j,\pi_{j}^{-1}\pi_{k}^{-1}x)= U(\tau_j, \pi_{j}^{-1}x) U(\tau_k, \pi_{k}^{-1}\pi_j^{-1}x) $$if $|j-k|>1$.

This means that $U:G\times X\rightarrow\CB(\CH)$ is a cocycle. Using the inner product of the direct integral space $\CH$, we obtain that
$(\psi_k^*f)(x)= \sqrt{\frac{d\mu(\pi_k x)}{d\mu(x)}} U(\tau_k, x)^* f(\pi_k x)$.

Then $$(\psi_k \psi_k^*f)(x)= \sqrt{\frac{d\mu(\pi_k^{-1} x)}{d\mu (x)}}\ \sqrt{\frac{d\mu (x)}{d\mu (\pi_k^{-1}x)}} U(\tau_k,
\pi_k^{-1}x)U(\tau_k, \pi_k^{-1}x)^* f(x) $$

But by condition (1), $$ (\psi_k \psi_k^*f)(x)=\sum_{l\in I_k} \lambda_{k,l} (P_{k,l}f)(x)= \sum_{l\in I_k} \lambda_{k,l} \chi_{X_{k,l}} (x) f(x)=
\lambda_{k,x_k}f(x) $$ then $U(\tau_k, \pi_k^{-1}x)U(\tau_k, \pi_k^{-1}x)^*=\lambda_{k,x_k}1_{\CH_x}$ and \ref{ecuacion (3)} is true.

 We have to prove the converse. By the theorem \ref{teorema 1}, $\psi$ given by \ref{represent} defines a representation of the braid
group $\B_n$ on the direct integral Hilbert space associated to the triple $(X, \mu, \nu)$. It rests to see that they satisfy the conditions
(1), (2) and (3).

$\psi_k \psi_k^*$ has discrete spectral decomposition because $U(\tau_k, \cdot)U(\tau_k, \cdot)^*$ is diagonalizable by \ref{ecuacion (3)}.

To see condition (2), we have that $$ \begin{array}{rl} (\psi_k\psi_k^* \psi_j \psi_j^*f)(x)= &\sqrt{\frac{d\mu (\pi_k^{-1}x)}{d\mu (x)}}\
\sqrt{\frac{d\mu (x)}{d\mu (\pi_k^{-1}x)}}U(\tau_k, \pi_k^{-1}x)U(\tau_k, \pi_k^{-1}x)^* \\
                          & \sqrt{\frac{d\mu
(\pi_j^{-1} x)}{d\mu (x)}} \ \ \sqrt{\frac{d\mu (x)}{d\mu (\pi_j^{-1}x)}} U(\tau_j, \pi_j^{-1}x) U(\tau_j, \pi_j^{-1}x)^* f(x) \end{array} $$

On the other hand, $$ \begin{array}{rl} (\psi_j\psi_j^* \psi_k \psi_k^*f)(x)=&\sqrt{\frac{d\mu (\pi_j^{-1} x)}{d\mu (x)}}\ \
\sqrt{\frac{d\mu(x)}{d\mu (\pi_j^{-1}x)}}U(\tau_j, \pi_j^{-1}x)U(\tau_j, \pi_j^{-1}x)^* \\
                         &\sqrt{\frac{d\mu (\pi_k^{-1} x)}{d\mu (x)}}\ \  \sqrt{\frac{d\mu (x)}
                         {d\mu (\pi_k^{-1}x)}} U(\tau_k, \pi_k^{-1}x)U(\tau_k, \pi_k^{-1}x)^* f(x)
\end{array} $$

But, by \ref{ecuacion (3)}, $U(\tau_k, \pi_k^{-1}x)U(\tau_k, \pi_k^{-1}x)^*f(x)=\lambda_{k}(x) 1_{\CH_x} f(x)$. In the same way $U(\tau_j,
\pi_j^{-1}x)U(\tau_j, \pi_j^{-1}x)^*(x)f(x)= \lambda_{j}(x)1_{\CH_x} f(x)$. Therefore, $U(\tau_k, \pi_k^{-1}x)U(\tau_k, \pi_k^{-1}x)^*$ commutes
with $U(\tau_j, \pi_j^{-1}x)U(\tau_j, \pi_j^{-1}x)^*$. Thus, \ref{condicion (2)} is proved.

For the last condition, we have that $$ \begin{array}{rl} (\psi_k P_{j,l}\psi_k^{-1}f)(x)&= \sqrt{\frac{d\mu (\pi_k^{-1} x)}{d\mu (x)}}U(\tau_k,
\pi_k^{-1}x) \chi_{X_{j,l}} (\pi_k^{-1}x)\\
                         &\qquad \qquad\qquad\qquad\sqrt{\frac{d\mu (x)}{d\mu (\pi_k^{-1} x)}}U(\tau_k, \pi_k^{-1}x)^{-1}f(\pi_k \pi_k^{-1}
                         (x))\\
&=\chi_{X_{j,l}}(\pi_k^{-1}(x))f(x)=(P_{\pi_k(X_{j,l})}f)(x) \end{array} $$ hence, (3) is true as we wanted. \end{proof}

\begin{rem} Observe that the proof of the Theorem \ref{teorema 2} does not use special properties of the braid group $\B_n$, then we can
substitute $\B_n$ for any group given by generators and relations. The proof in general only differs in the relations of the cocycle $U$ that
must depend on the relations of the group.

A similar results was obtained by Varadarajan for unitary representations of a group $G$ (see \cite{V} Theorem 6.11, pag 220), but his data is a
systems of imprimitivity. However, we propose the family of projections $\widetilde{\CF}$ such that $(\psi,\widetilde{\CF})$ can be a systems of
imprimitivity. \end{rem}

\begin{cor} \label{subespacio} Let $(\pi, X, \mu, \nu, U)$ be a $5$-tuple which satisfies the conditions of the Theorem \ref{teorema 2}. Let
$\phi$ be the representation of $\B_n$, $n\leq \infty$, associated to that $5$-tuple such that $\phi(\tau_k)$ is a self-adjoint operator for all
$k$, $1\leq k\leq n-1$. Then, every invariant closed subspace $\CK$ of the direct integral Hilbert space $\CH$ associated to $(X, \mu, \nu)$, is
a direct integral Hilbert space associated to $(X_{\CK}, \mu_{\CK}, \nu_{\CK})$, where $X_{\CK}\subset X$, $\mu_{\CK}=\mu_{|_{X_\CK}}$ and
$\nu_{\CK}(x)\leq \nu(x)$ for all $x\in X_{\CK}$. \end{cor}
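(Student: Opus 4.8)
The plan is to use the self-adjointness hypothesis to show that the orthogonal projection $Q$ onto the invariant subspace $\CK$ lies in the commutant of the diagonal algebra $N$, so that $Q$ is a decomposable operator; the corollary then follows by reading off the fiberwise image of $Q$.

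First I would invoke the elementary fact that a closed subspace invariant under a self-adjoint operator reduces it. Writing $Q$ for the orthogonal projection onto $\CK$: if $\phi(\tau_k)\CK\subseteq\CK$ and $\phi(\tau_k)=\phi(\tau_k)^*$, then for $f\in\CK^{\perp}$ and $g\in\CK$ one has $\langle\phi(\tau_k)f,g\rangle=\langle f,\phi(\tau_k)g\rangle=0$, so $\CK^{\perp}$ is invariant as well, and therefore $Q$ commutes with $\phi(\tau_k)=\psi_k$ for every $k$. In particular $Q$ commutes with $\psi_k\psi_k^*=\psi_k^2$ for all $k$, hence with every element of the von Neumann algebra $N$ generated by $\CF=\{\psi_k\psi_k^*\}$; that is, $Q\in N'$.

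Next I would use the realization of $N$ furnished by Remark \ref{integral directa}: with respect to the direct integral decomposition $\CH=\int_X^{\oplus}\CH_x\,d\mu(x)$ produced there, $N$ is exactly the algebra $\{M_{\varphi}:\varphi\in L^{\infty}(X,\mu)\}$ of diagonalizable operators, whose commutant is the algebra of decomposable operators. Thus $Q\in N'$ yields a measurable field $x\mapsto Q(x)$ with $(Qf)(x)=Q(x)f(x)$ a.e., and since $Q$ is a self-adjoint idempotent, $Q(x)$ is an orthogonal projection of $\CH_x$ for almost every $x$. I would then set $\CH_x^{\CK}:=Q(x)\CH_x$, $\nu_{\CK}(x):=\dim\CH_x^{\CK}$ (measurable, with $\nu_{\CK}(x)\le\dim\CH_x=\nu(x)$), $X_{\CK}:=\{x\in X:\nu_{\CK}(x)\ge 1\}$ and $\mu_{\CK}:=\mu|_{X_{\CK}}$. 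Because $f\in\CK$ iff $Qf=f$ iff $f(x)\in Q(x)\CH_x$ a.e., one obtains $\CK=\int_X^{\oplus}Q(x)\CH_x\,d\mu=\int_{X_{\CK}}^{\oplus}\CH_x^{\CK}\,d\mu_{\CK}(x)$, which is the asserted presentation of $\CK$ as a direct integral over $(X_{\CK},\mu_{\CK},\nu_{\CK})$.

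The step I expect to be the crux is the passage from $Q\in N'$ to $Q$ decomposable. This rests on the fact that the decomposition of Remark \ref{integral directa} realizes $N$ as the full diagonal algebra, not merely a subalgebra of it, so that its commutant is precisely the decomposable operators; and on the measurability of the field $x\mapsto Q(x)$, and hence of $\nu_{\CK}$, which is what guarantees that $(X_{\CK},\mu_{\CK},\nu_{\CK})$ is an admissible triple in the sense of the earlier definitions. Note that self-adjointness of the $\phi(\tau_k)$ is used in an essential way: without it, invariance of $\CK$ under the $\psi_k$ would not force $Q$ to commute with $\psi_k^*$, and $Q$ could fail to lie in $N'$, so $\CK$ need not be decomposable along the fibers of the given direct integral.
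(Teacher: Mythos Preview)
Your argument is correct, and it takes a somewhat different route from the paper. Both proofs begin identically: self-adjointness of $\phi(\tau_k)$ forces $\CK^{\perp}$ to be invariant, so the orthogonal projection $P_\CK$ commutes with every $\phi_k$. From there you go directly through direct integral theory: $P_\CK\in N'$, the commutant of the diagonal algebra is the algebra of decomposable operators, so $P_\CK$ is a measurable field of fiberwise projections, and $\CK=\int_X^{\oplus}P_\CK(x)\CH_x\,d\mu$. The paper instead compresses the representation to $\widetilde{\phi}_k:=P_\CK\phi_kP_\CK$ on $\CK$, checks that $\widetilde{\phi}$ again satisfies hypotheses (1)--(3) of Theorem~\ref{teorema 2} (using that $P_\CK$ commutes with $\phi_k$ and with the spectral projections $P_{k,l}$), and then re-applies Theorem~\ref{teorema 2} to obtain a full $5$-tuple $(\pi_\CK,X_\CK,\mu_\CK,\nu_\CK,P_\CK UP_\CK)$ for $\widetilde{\phi}$; the identifications $X_\CK\subset X$, $\CK_x=\CK\cap\CH_x$, $\mu_\CK=\mu|_{X_\CK}$ are then read off from the explicit description $Q_{k,l}=P_\CK P_{k,l}P_\CK$.

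Your approach is shorter and more conceptual for the bare statement of the corollary; the paper's approach, while heavier, yields the additional information that the restricted representation on $\CK$ is itself parametrized by a $5$-tuple with cocycle $P_\CK UP_\CK$ and action $\pi_\CK$, which is used implicitly in the later irreducibility arguments (e.g.\ the $\pi$-invariance of $\nu_\CK$). Your remark that the key step is knowing $N$ equals the \emph{full} algebra of diagonalizable operators (so that $N'$ is exactly the decomposable operators) is well placed; this is indeed how the decomposition of Remark~\ref{integral directa} is constructed.
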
 \begin{proof} Let $\mathcal{K}\subseteq \CH$ be a closed invariant subspace, then
$\CK$ is also invariant for $\phi(\tau_k)^*=\phi(\tau_k)$. Thus, $\phi(\tau_k)$ commutes with $P_\CK$, the orthogonal projection onto the
subspace $\CK$.

Therefore, $\widetilde{\phi_k}:=P_\CK \phi_k P_\CK$ is a representation of $\B_n$ on $\CK$ that verifies the hypothesis of the Theorem
\ref{teorema 2}. In fact, $$ \widetilde{\phi_k} \widetilde{\phi_k}^*=P_\CK \phi_k P_\CK \phi_k^*P_\CK=P_\CK \phi_k \phi_k^* P_\CK= \sum_{l\in I_k}
\lambda_{k,l} P_\CK P_{k,l} P_\CK=\sum_{l\in I_k} \lambda_{k,l} Q_{k,l} $$ and $$ \begin{aligned} \widetilde{\phi_k}
\widetilde{\phi_k}^*\widetilde{\phi_j} \widetilde{\phi_j}^* &=P_\CK \phi_k P_\CK \phi_k^*P_\CK P_\CK \phi_j P_\CK \phi_j^*P_\CK =P_\CK
\phi_k\phi_k^*\phi_j\phi_j^*P_\CK\\ &=P_\CK \phi_j\phi_j^*\phi_k\phi_k^*P_\CK= \widetilde{\phi_j} \widetilde{\phi_j}^*\widetilde{\phi_k}
\widetilde{\phi_k}^* \end{aligned} $$ Finally $$ \widetilde{\phi_k} Q_{j,l} \widetilde{\phi_k}^{-1}=P_\CK \phi_k P_\CK P_\CK P_{k,l} P_\CK P_\CK
\phi_k^{-1} P_\CK= P_\CK \phi_k P_{k,l}\phi_k^{-1} P_\CK \in \widetilde{N}:= P_\CK N P_\CK $$ Then, $\widetilde{\phi}=\phi_{(\pi_\CK, X_\CK,
\mu_\CK, \nu_\CK, P_\CK U P_\CK)}$.

Moreover $$ X_\CK=\{ (x_1, \dots, x_{n-1})\in X :  P_\CK P_{k, x_k} P_\CK \neq 0 \text{    for all   } k, 1\leq k \leq n-1\}\subseteq X $$ and
$$ \CK_x=\overline{\cap_k \IM (P_\CK P_{k,x_k} P_\CK)}=\overline{\IM P_\CK \cap(\cap_k \IM P_{k, x_k})}=\CK \cap \CH_x $$ Therefore,
$\nu_{\CK}\leq \nu$ and $\mu_{\CK}=\mu_{|_{X_\CK}}$. \end{proof}

\section{Well known examples}\label{Ejemplos}

\subsection{The standard representation} \label{Standart}

As long as we know, the first time that the standard representation appeared was in a work of Dian-Min Tong, Shan-De Yang and Zhong-Qi Ma [TYQ].
Sysoeva probed that it is the unique irreducible  representation  of $\B_n$ of dimension $n$, if $n\geq 9$, [S]. It is defined in the following
way for each $k=1,\dots ,n-1$,

$$ \rho(\tau_k)= \left(\begin{array}{ccccccc}
          1      &\   &\      &\   &\   &\        &\  \\
          \      &1   &\      &\   &\   &\        &\  \\
          \      &\   &\ddots &\   &\   &\        &\  \\
          \      &\   &\      &0   &t   &\        &\  \\
          \      &\   &\      &1   &0   &\        &\  \\
          \      &\   &\      &\   &\   &\ddots   &\  \\
          \      &\   &\      &\   &\   &\        &1
\end{array}\right) $$ where $t$ is in the place $(k, k+1)$ and $\rho_k:=\rho(\tau_k)$ acts on $\CH= \mathbb{C}^n$.

They verify the conditions of the theorem. In fact, $$ \rho_k \rho_k^*= \left(\begin{array}{ccccccc}
          1      &\   &\      &\       &\   &\        &\  \\
          \      &1   &\      &\       &\   &\        &\  \\
          \      &\   &\ddots &\       &\   &\        &\  \\
          \      &\   &\      &|t|^2   &\   &\        &\  \\
          \      &\   &\      &\       &1   &\        &\  \\
          \      &\   &\      &\       &\   &\ddots   &\  \\
          \      &\   &\      &\       &\   &\        &1
\end{array}\right) $$
 is a linear operator with two eigenvalue $\lambda_{k,0}=1$ and
$\lambda_{k,1}=|t|^2$; the projections associated to them are the orthogonal projections over the spaces $W_{k,0}=\Ker (\rho_k \rho_k^*- 1_\CH)$
and $W_{k,1}=\Ker (\rho_k \rho_k^* -|t|^2 1_\CH)$ respectively.

As $\rho_k \rho_k^*$ are diagonal for all $k$, they commute. It is easy to see that for every $l, k$, $0\leq l\leq 1$ and $1\leq k\leq n-1$ we
have that $\rho_k P_{j,l}\rho_k^{-1}=P_{j,l}$ if $j\neq k, k+1$. While $\rho_k P_{k,l}\rho_k^{-1}=P_{k+1, l}$ and $\rho_k P_{k+1,
l}\rho_k^{-1}=P_{k,l}$ if $k\leq n-2$. Finally $\rho_{n-1}P_{n-1, 0}\rho_{n-1}^{-1}=\wedge_{j=1}^{n-1} P_{j, 1}$ and $\rho_{n-1}P_{n-1,
1}\rho_{n-1}^{-1}=\sum_{j=1}^{n-1} P_{j, 0}$. Hence, the condition (3) of theorem \ref{teorema 2} is verified. Following the proof of theorem
\ref{teorema 2}, we can see that the standard representation is equivalent to the representation $\phi_{(\pi, X, \mu, \nu , U)}$ given by the
$5$-tuple $(\pi, X, \mu, \nu ,U)$, where \begin{enumerate} \item $X=\{(x_1, \dots , x_{n-1}) : x_i=0,1;\    i=1,\dots, n-1\}$, \item If
$\delta_k$ is the $(n-1)$-tuple with $1$ in the place $k$ and zero elsewhere for $k=1, \dots, n-1$, and $\delta_0=(0, \dots, 0)$ $$\mu (x)=\left
\{\begin{array}{ll}
    1       &\text{   if  } x=\delta_k \textrm{  for some  } k=0, \dots, n-1   \\
    0       &\text{  in other case  }
\end{array}\right. $$ \item The action $\pi$ is defined for almost all $x\in X$ by $$\pi_k(x_1, \dots, x_{n-1})=(x_1, \dots, x_{k-1}, x_{k+1},
x_k, \dots, x_{n-1})$$ if $k=1, \dots, n-2$. And $\pi_{n-1}$ is defined by $$ \begin{array}{rl} \pi_{n-1}(x_1, \dots, x_{n-1})= \left
\{\begin{array}{ll}
    (0,\dots, 0)            & \text{  if  } (x_1, \dots, x_{n-1})=(0, \dots, 1)   \\
    (0, \dots, 1)            &\text{  if  } (x_1, \dots, x_{n-1})=(0, \dots, 0)     \\
    (x_1, \dots, x_{n-1})    & \text{ in other cases   }
\end{array}\right. \end{array} $$

\item $\nu(x)=1$ for almost all $x\in X$, then $\CH_x=\C$, \item  $U(\tau_k, \tau_k^{-1}x)= 1+(t-1)x_{k+1} \in \C$. \end{enumerate}

Note that the measure $\mu$ is invariant by the action $\pi$. Therefore $p_k(x)=\frac{d\mu(\pi_k(x)}{d\mu(x)}=1$ for almost all $x$.

If $\{\beta_j: j=1, \dots, n\}$ is the canonical basis of $\C^n$, and $f_j=\chi_{\delta_j}$ is the characteristic function of the element
$\delta_j$ define $$ \begin{array}{rl}
 \alpha :\C^n & \rightarrow \CH   \\
      \beta_j &\mapsto \left
\{\begin{array}{ll}
    f_j   &\text{   if    } j=1, \dots, n-1   \\
    f_0   &\text{  if    } j=n
\end{array}\right. \end{array} $$

Then $\alpha (\rho_k (\beta_j))=\phi_k(\alpha(\beta_j))$ for all $j$,$1\leq j\leq n$. Thus, the representations are equivalent.

Observe that $|X|=2^n$, however the $\dim \CH=n$. This happens because there are only $n$ points of $X$ with non zero  measure.

Note that when $n=\infty$, the action of the element $\pi_k$ consists on the permutation of the places $k$ and $k+1$. Hence, we obtain a
representation of $\B_{\infty}$ as a generalization of the standard representation.

\subsection{Local Representations}

Let $V$ be a vector space and $c: V\otimes V\rightarrow V\otimes V$ an invertible lineal operator, $c$ satisfies the braid equation if it
satisfies the following equality in $V \otimes V \otimes V= V^{\otimes 3}$ $$ (c\otimes 1_V)(1_V\otimes c)(c\otimes 1_V)=(1_V\otimes c)(c
\otimes 1_V)(1_V\otimes c) $$

In this case $(V,c)$ is called a braid vector space and $c$ a $R$-matrix. On the space $V^{\otimes n}$ we may define a representation of $\B_n$
in the following way. For each $k=1, \dots , n-1$ let $c(\tau_k):=c_k: V^{\otimes n}\rightarrow V^{\otimes n}$ given by $c_k=1_{k-1}\otimes c
\otimes 1_{n-k-1}$, where $1_k= 1_V\otimes \dots \otimes 1_V$, is the $k$ times tensor product of the identity on $V$; this representation is
called {\it{local}}.


If there exists a basis $\beta=\{v_1, \dots, v_m\}$ of $V$ such that $c(v_i\otimes v_j)= q_{ij} v_j\otimes v_i$ for all $i,j$, $1\leq i,j\leq m$
where the $q_{ij}$ are non zero complex numbers, the representation is called of {\it{diagonal}} type \cite{AG}. In this case, the operators
$c_k$ satisfy the hypothesis of the theorem \ref{teorema 2}. In fact, $$ c_k(v_{j_1}\otimes \dots \otimes v_{j_n})= q_{j_k, j_{k+1}}
v_{j_1}\otimes\dots \otimes v_{j_{k+1}}\otimes v_{j_k} \otimes \dots \otimes v_{j_n} $$ where $j_i \in \{1, \dots , \dim V\}$ for all $i=1,
\dots, n$. Then

$$ c_k^*(v_{j_1}\otimes \dots \otimes v_{j_n})=\overline{q_{j_{k+1}, j_k}} v_{j_1}\otimes\dots \otimes v_{j_{k+1}}\otimes v_{j_k} \otimes \dots \otimes v_{j_n} $$ Then, $$ c_k c_k^* (v_{j_1}\otimes \dots \otimes v_{j_n})=|q_{j_k, j_{k+1}}|^2 v_{j_1}\otimes\dots \otimes v_{j_k}\otimes v_{j_{k+1}} \otimes \dots \otimes v_{j_n} $$ If $S=\{(a,b): a,b\in \{1, \dots, \dim V\} \}$, we have that $$ c_kc_k^* =\sum_{(a,b)\in S} |q_{(a,b)}|^2 P_{k,(a,b)} $$ where $P_{k, (a,b)}$ is the projection over the subspace of $V^{\otimes n}$ generated by all the vectors $v_{j_1}\otimes \dots \otimes v_{j_n}$ such that $j_k=a$ y $j_{k+1}=b$. 

As $c_kc_k^*$ is diagonal for all $k$, $c_kc_k^*$ and $c_jc_j^*$ commute for all $j$.

We have to see the condition (3) of theorem \ref{teorema 2}. In fact, $$ c_kP_{j, (a, b)}c_k^{-1}=\left\{ \begin{array}{ll}
              P_{j, (a,b)}        &  \text{    if    } |j-k|>1 \\
              P_{j, (b, a)}       & \text{    if    }  j=k       \\
              \sum_{c=1}^{\dim V} P_{k-1, (a, c)}\wedge P_{k, (c, b)}       & \text{    if    }  j=k-1  \\
              \sum_{c=1}^{\dim V} P_{k, (a, c)}\wedge P_{k+1, (c, b)}       & \text{    if    }  j=k+1
\end{array}\right. $$

Therefore, following the proof of Theorem \ref{teorema 2}, the representation is equivalent to $\phi_{(\pi, X, \mu, \nu,  U)}$, where
\begin{enumerate} \item $X=\{(x_1, \dots , x_{n-1}) : x_i=(a_i, b_i) , a_i,b_i \in \{1, \dots,
 \dim V\}\}$,
\item  $\mu ((a_1, b_1), \dots, (a_{n-1}, b_{n-1}))=\left \{\begin{array}{ll}
    1       &\text{   if  } b_i=a_{i+1} \text{    for all   } i=1, \dots, n-2   \\
    0       &\text{  in other case  }
\end{array}\right.$ \item The action $\pi$ is defined for almost all $x\in X$ by $$ \begin{aligned} \pi_k&((a_1, a_2),(a_2, a_3), \dots,
(a_{n-1}, b_{n-1}))=\\ &\qquad=((a_1, a_2),\dots, (a_{k-1}, a_{k+1}), (a_{k+1}, a_k),(a_k, a_{k+2}), \dots, (a_{n-1}, b_{n-1})),
\end{aligned} $$ and $\pi_k^{-1}(x)=\pi_k(x)$ for almost all $x$. \item  $\nu(x)=1$ for all $x\in X$, \item  $U(\tau_k,
\tau_k^{-1}x)=q_{x_k}$. \end{enumerate}

Let $\alpha :V^{\otimes n} \rightarrow \CH$ be the lineal operator defined in the basis of $V^{\otimes n}$ by $$ \alpha(v_{j_1}\otimes \dots
\otimes v_{j_n})=\chi_{((j_1, j_2), (j_2,j_3), \dots ,(j_{n-1}, j_n))} $$

It verifies that $\alpha(c_k(v_{j_1}\otimes \dots \otimes v_{j_n}))= \phi_k (\alpha(v_{j_1}\otimes \dots \otimes v_{j_n}))$ showing the
equivalence of the representations.

In the same way as in the previous example, if $n=\infty$ this construction permits us to generalize
 this representation to one of $\B_{\infty}$ in a natural way.

\


Other local representation is the following (\cite{AS}). Let $H$ be any group and let $T\subset H$ be such that for all $g\in H$ and $t\in T$,
$gtg^{-1}\in T$, that is $T$ is closed by conjugation of elements of $H$. Let $\gamma : H \times T\rightarrow \mathbb{C}-\{0\}$ be a function
such that for all $g,h\in H$, and $t\in T$

\begin{enumerate} \item $\gamma(1,t)=1$ \item $\gamma(gh,t)=\gamma(g,hth^{-1}) \gamma(h,t)$ \end{enumerate}

Let $V$ be the complex vector space with orthonormal basis $\{v_t: t\in T\}$ and let $c$ be defined by $c(v_s\otimes v_t)=
\gamma(s,t)v_{sts^{-1}}\otimes v_s$. The conditions imposed to $\gamma$ ensure that $c$ verify the braid equation.

Then $$c_k(v_{g_1}\otimes \dots \otimes v_{g_n})= \gamma(g_k,g_{k+1}) v_{g_1}\otimes \dots v_{g_{k-1}}\otimes v_{g_k g_{k+1} g_k^{-1}} \otimes v_{g_k} \otimes v_{g_{k+2}}\otimes \dots \otimes v_{g_n} $$ Taking  the inner product in $V$, $$ c_k^*(v_{g_1}\otimes \dots \otimes v_{g_n})= \gamma(g_{k+1}, g_{k+1}^{-1}g_k g_{k+1}) v_{g_1}\otimes \dots \otimes v_{g_{k-1}}\otimes v_{g_{k+1}} \otimes v_{g_{k+1}^{-1} g_k g_{k+1}} \otimes \dots \otimes v_{g_n} $$ We obtain that $$ c_kc_k^*(v_{g_1}\otimes \dots \otimes v_{g_n})=|\gamma(g_{k+1}, g_{k+1}^{-1}g_k g_{k+1})|^2 v_{g_1}\otimes \dots \otimes v_{g_n} $$ If $S=\{(a,b): a,b\in  T \}$ we have that $$ c_kc_k^* =\sum_{(a,b)\in S} |\gamma(b, b^{-1} a b)|^2 P_{k,(a,b)} $$ where $P_{k, (a,b)}$ is the projection over the subspace of $V^{\otimes n}$ generated for all the vectors $v_{g_1}\otimes \dots \otimes v_{g_n}$ such that $g_k=a$ and $g_{k+1}=b$. 

As $c_kc_k^*$ is diagonal for all $k$, it commutes with $c_jc_j^*$ for all $j$. We have just checked only the conditions (1) and (2) of Theorem
\ref{teorema 2}. For the condition (3) we have that, $$ c_kP_{j, (a, b)}c_k^{-1}=\left\{ \begin{array}{ll}
              P_{j, (a,b)}        &  \text{    if    } |j-k|>1 \\
              P_{j, (aba^{-1}, a)}       & \text{    if    }  j=k       \\
              \sum_{c\in T} P_{k-1, (a, c)}\wedge P_{k, (c, b)}       & \text{    if    }  j=k-1  \\
              \sum_{c\in T} P_{k, (cac^{-1}, c)}\wedge P_{k+1, (c, b)}       & \text{    if    }  j=k+1
\end{array}\right. $$

Then this representation is equivalent to
 $\phi:=\phi_{(\pi, X, \mu, \nu, U)}$, where
\begin{enumerate} \item $X=\{(x_1, \dots , x_{n-1}) : x_i=(a_i, b_i), a_i, b_i\in T\}$, \item  $\mu ((a_1, b_1), \dots, (a_{n-1},
b_{n-1}))=\left \{\begin{array}{ll}
    1       &\text{   if  } b_i=a_{i+1} \text{    for all   } i=1, \dots, n-2   \\
    0       &\text{  in other cases  }
\end{array}\right.$ \item The action $\pi$ is defined for almost all $x\in X$ by $$ \begin{aligned} \pi_k&((a_1, a_2),(a_2, a_3) \dots,
(a_{n-1}, b_{n-1}))= \\ &=((a_1, a_2),\dots, (a_{k-1}, a_k a_{k+1} a_k^{-1}), (a_k a_{k+1} a_k^{-1},a_k),
               (a_k, a_{k+2}), \dots, (a_{n-1}, b_{n-1})),
\end{aligned} $$ and $$ \begin{aligned} \pi_k^{-1}&((a_1, a_2),(a_2,a_3) \dots, (a_{n-1}, b_{n-1}))=((a_1, a_2),\dots,(a_{k-1}, a_{k+1}), \\
& \qquad\qquad\qquad\qquad(a_{k+1}, a_{k+1}^{-1}a_k a_{k+1}),(a_{k+1}^{-1}a_k a_{k+1}, a_{k+2}), \dots, (a_{n-1}, b_{n-1})), \end{aligned}
$$
 Note that $\pi_k^{-1}\neq \pi_k$

\item  $\nu(x)=1$ for all $x\in X$, \item  $U(\tau_k, \tau_k^{-1}x)=\gamma(x_k)=\gamma(a_k, b_k)$.

\end{enumerate}

The map $\alpha :V^{\otimes n} \rightarrow \CH$ defined in the basis of $V^{\otimes n}$ by $$ \alpha (v_{g_1}\otimes \dots \otimes
v_{g_n})=\chi_{((g_1,g_2), (g_2, g_3), \dots ,(g_{n-1}, g_n))} $$ gives the equivalence between the representations.

Likewise in the previous examples, this representations can be generalized to $\B_{\infty}$ in a natural way taking $n=\infty$.

\subsection{The Burau Representation}

The Burau representation was defined by Burau in $1936$. Formanek [F] proved that this representation and  an ``extended" one of it, are the
unique irreducible representations of $\B_n$ of dimension $n-1$, if $n\geq 7$. The Burau representation is defined by the following operators
acting on $\C^{n-1}$, $$ \rho_1= \left(\begin{array}{cccccccc}
           -t    &1    &\    &\       &\   &\    &\      &\  \\
            0    &1    &\    &\       &\   &\    &\      &\  \\
            \    &\    &1    &\       &\   &\    &\      &\  \\
            \    &\    &\    &\ddots  &\   &\    &\      &\  \\
            \    &\    &\    &\       &1   &\    &\      &\  \\
            \    &\    &\    &\       &\   &1    &\      &\  \\
            \    &\    &\    &\       &\   &\    &\ddots &\  \\
            \    &\    &\    &\       &\   &\    &\      &1
\end{array}\right), $$ $$ \rho_k= \left(\begin{array}{ccccccccc}
            1    &\    &\    &\       &\   &\    &\      &\       &\
  \\
            \    &1    &\    &\       &\   &\    &\      &\       &\
  \\
            \    &\    &1    &\       &\   &\    &\      &\       &\
  \\
            \    &\    &\    &\ddots  &\   &\    &\      &\       &\
  \\
            \    &\    &\    &\       &1   &0    &0      &\       &\
  \\
            \    &\    &\    &\       &t   &-t   &1      &\       &\
  \\
            \    &\    &\    &\       &0   &0    &1      &\       &\
  \\
            \    &\    &\    &\       &\   &\    &\      &\ddots  &\
  \\
            \    &\    &\    &\       &\   &\    &\      &\       &1
\end{array}\right),   \ \ \ \ if \ \ 2\leq k\leq n-2, $$ where $-t$ is in the place $(k,k)$, $$ \rho_{n-1}= \left(\begin{array}{ccccccccc}
            1    &\    &\    &\       &\   &\    &\      &\   &\  \\
            \    &1    &\    &\       &\   &\    &\      &\   &\  \\
            \    &\    &1    &\       &\   &\    &\      &\   &\  \\
            \    &\    &\    &\ddots  &\   &\    &\      &\   &\  \\
            \    &\    &\    &\       &1   &\    &\      &\   &\  \\
            \    &\    &\    &\       &\   &1    &\      &\   &\  \\
            \    &\    &\    &\       &\   &\    &\ddots &\   &\  \\
            \    &\    &\    &\       &\   &\    &\      &1   &0  \\
            \    &\    &\    &\       &\   &\    &\      &t   &-t
\end{array}\right), $$ These operators do not satisfy the condition that $\rho_k \rho_k^*$ commute with $\rho_j \rho_j^*$ for all $j$, therefore
they do not correspond to a representation given by a $5$-tuple as in Theorem \ref{teorema 2}.

\subsection{Other examples}\label{EG} Consider the representations given in \cite{EG}, they are constructed in the following way. Choose $n$ non
negative integers $z_1, z_2, \dots, z_n$, no necessarily different. Let $X$ be the set of all the possible $n$-tuples obtained by permutation of
the coordinates of the fixed $n$-tuple $(z_1, \dots, z_n)$. Let $V$ be a complex vector space with orthonormal basis $\beta=\{v_x : x\in X\}$.
Then the dimension of $V$ is the cardinality of $X$.

Define $\phi:\B_n \rightarrow \Aut (V)$, such that $$ \phi(\tau_k)(v_x)= q_{x_k, x_{k+1}} v_{\sigma_k(x)} $$ where $q_{x_k, x_{k+1}}$ is a
non-zero complex number that depends on $x$. Really it only depends on the places $k$ and $k+1$ of $x$. Define $$ \sigma_k(x_1, \dots,
x_n)=(x_1, \dots, x_{k+1}, x_k, \dots, x_n)) $$

It is easy to see that this representation is given by the $5$-tuple $(\pi, X', \mu, \nu, U)$, where \begin{enumerate} \item Let $Y:=\{z_1, z_2,
\dots, z_n \}$, then $X':= Y^n$; \item $\pi(\tau_k)(x_1, \dots, x_k, x_{k+1}, \dots, x_n)=(x_1, \dots, x_{k+1}, x_k \dots, x_n)$, $1\leq k\leq
n-1$; \item Let $a:=(z_1, z_2, \dots, z_n)\in X'$. $\mu$ is defined by $$\mu(x)=1   \Leftrightarrow {\text{  there exists   }} \tau\in \B_n
{\text{  such that   }} x=\pi(\tau)a ; $$ \item $\nu(x)=1$, for all $x\in X'$; \item $U(\tau_k, \tau_k x):= q_{x_k, x_{k+1}}$. \end{enumerate}

In \cite{EG} the reader can find a necessary condition for this representation to be irreducible and the following explicit family of
irreducible representations. Let $z_1=\dots=z_m=1$, $z_{m+1}=\dots =z_n=0$ and $$ q_{x_k, x_{k+1}}=\left \{\begin{array}{ll}
    1       &\text{    if   } x_k=x_{k+1}   \\
    t       &\text{    if   } x_k\neq x_{k+1}
\end{array}\right. $$ where $t$ is a real number, $t\neq 0, 1, -1$. The dimension of this representation is $\left( \begin{smallmatrix}
n\\\noalign{\medskip}m \end{smallmatrix}\right)=\frac{n!}{m! (n-m)!}$. If $m=1$, it is equivalent to the standard representation.

\section{Quasi-invariant measure}\label{medidas}

In this section we analyze the $\pi$-quasi-invariance condition of the measure. We will give examples of measures that satisfy this condition.

\subsection{Discrete Measures}

A measure $\mu$ over a set $X$ is called discrete if the $\sigma$-algebra of measurable sets is $\mathcal{P}(X)$ the set of all the subsets of
$X$. In particular, single points are measurable sets.

Let $X=\{(x_1, \dots, x_{n-1}): x_i\in I_i\}$, $n\leq \infty$, for some index sets $I_i$ and let $a$ be a fixed element of $X$. Then an easy example
of $\pi$-quasi-invariant discrete measure is the following $$ \mu_a(x)=1 \Longleftrightarrow {\text{  there exists   }} \tau\in \B_n {\text{
such that   }} x=\pi(\tau)a $$ That is, $\mu_a$ is concentrated in $\pi(\B_n) a$, the orbit of $a$. This is the case in examples \ref{Standart}
and \ref{EG}.

More generally, let $Y$ be a subset of $X$, $\mu=\sum_{a\in Y} \mu_a$ is a $\pi$-quasi-invariant discrete measure. It is concentrated in 
$\cup_{a\in Y} \pi(\B_n) a$, the union of the orbits $\pi(\B_n) a$, with $a\in Y$.

\subsection{Product Measures}

Consider that the space $X$ is a direct product of $n-1$ set $Y_k$, $n\leq\infty$. If we assign to each set $Y_k$ a measure $\mu_k$, we can
construct the product measure for $X$.

Remember the construction of the product space. Assume that for all $k$, $(Y_k, \mu_k)$ is a measure space and that $\mu_k(Y_k)=1$. If $X$ is a
product of a finite number of $Y_k$, this last condition is not necessary.

We assign to $X$ the $\sigma$-algebra $R$ of Borel subsets, generated by ``cylindrical" sets. These sets are defined by $$ X(a_{i_1}, \dots,
a_{i_r})=\{(x_1, \dots, x_{n-1}): x_{i_j}=a_{i_j}, j=1, \dots, r\} $$ where $a_{i_j}\in Y_{i_j}$ for all $j=1, \dots, r$, $r<\infty$.

Now, we have to define a measure over this $\sigma$-algebra $R$. It is enough to define the measure of the cylindrical set (see [H] pags
155-160), that is $$ \mu(X(a_{i_1}, \dots, a_{i_r}))= \prod_{j=1}^r \mu_{i_j}(a_{i_j})>0 $$

Let $\pi$ be an action of $\B_n$ on $X$. We analyze the $\pi$-quasi-invariance of product measures. In order to that we suppose that all the
$Y_k$ are equal, that is $Y_k=Y$ for all $k$. Furthermore, we assume that $\pi_k(X(a_{i_1}, \dots, a_{i_r}))=X(b_{j_1}, \dots, b_{j_s})$ then,
$$ \begin{aligned} &\mu(\pi_k(X(a_{i_1}, \dots, a_{i_r})))=\mu(X(b_{j_1}, \dots, b_{j_r}))=\prod_{l=1}^r \mu_{j_l}(b_{j_l})=\\ &\qquad
\qquad\qquad \qquad \qquad \qquad = \frac{\prod_{l=1}^s \mu_{j_l}(b_{j_l})}{\prod_{l=1}^r \mu_{i_j}(a_{i_j})} \mu(X(a_{i_1}, \dots, a_{i_r}))
\end{aligned} $$ Hence, if $A$ is a cylindrical set there exists a complex number $c_{k, A}$ such that $$ \mu(\pi_k(A))=c_{k,A} \mu(A) $$

\begin{prop}\label{medida producto} Let $(X=\prod_{k=1}^{n-1} Y, \mu=\prod_{k=1}^{n-1}\mu_k)$ be a product measure space with $n\leq \infty$. If
$Y=\{a^1, \dots, a^t\}$ is a finite set and each $\pi_k$ acts changing only finite coordinates $x_j$ of $x$, then $\mu$ is
$\pi$-quasi-invariant. \end{prop}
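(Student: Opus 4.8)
The plan is to establish $\pi$-quasi-invariance by verifying it on each generator $\pi_k$ separately and then exploiting the product structure to reduce everything to a computation on a finite set. The collection of $g\in\B_n$ for which $\mu(E)=0\iff\mu(\pi(g)E)=0$ (for all $E$) is a subgroup: composing null-set-preserving maps preserves null sets, and the condition is symmetric under inverses. Hence it suffices to show, for each $k$, that $\mu(E)=0$ if and only if $\mu(\pi_k E)=0$; equivalently, writing $\mu_k:=\mu\circ\pi_k$ for the transported measure, I want $\mu_k$ and $\mu$ to be mutually absolutely continuous.

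First I would use that $\pi_k$ alters only the coordinates in some finite set $F\subset\{1,\dots,n-1\}$. This lets me factor $X=X_F\times X_{F^c}$, where $X_F=\prod_{j\in F}Y$ is a \emph{finite} set, with the product measure splitting as $\mu=\mu_F\times\mu_{F^c}$ and $\mu_F=\prod_{j\in F}\mu_j$. As in the examples, $\pi_k$ has the form $\sigma_k\times\mathrm{id}_{X_{F^c}}$ for a bijection $\sigma_k$ of the finite set $X_F$, so that $\mu_k=(\mu_F\circ\sigma_k)\times\mu_{F^c}$. By the Fubini characterization of null sets for a product measure, a set $E$ is $\mu$-null precisely when $\mu_{F^c}$-almost every fibre of $E$ is $\mu_F$-null, and similarly for $\mu_k$; thus the whole question collapses to showing that $\mu_F$ and $\mu_F\circ\sigma_k$ are equivalent measures on the finite set $X_F$.

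The quantitative input is the formula $\mu(\pi_k(A))=c_{k,A}\,\mu(A)$ already established for cylindrical sets $A$, where $c_{k,A}$ is a ratio of products of the atomic masses $\mu_j(a^i)$. Because $Y$ is finite and only the coordinates in the finite set $F$ are moved, each $c_{k,A}$ is a product of at most $\abs{F}$ factors drawn from the finite collection $\{\mu_j(a^i)\}$; hence the $c_{k,A}$ take only finitely many values, all positive provided each $\mu_j$ charges every point of $Y$. Setting $m=\min_A c_{k,A}>0$ and $M=\max_A c_{k,A}<\infty$ gives the uniform two-sided bound $m\,\mu(A)\le\mu_k(A)\le M\,\mu(A)$ on the generating algebra of cylinders. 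A standard approximation argument, in which the finite measures $\mu$ and $\mu_k$ are approximated on the generated $\sigma$-algebra by elements of this algebra, then propagates $m\,\mu\le\mu_k\le M\,\mu$ to all measurable sets, which is exactly the desired mutual absolute continuity.

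The step I expect to be the real obstacle is the finite-set equivalence $\mu_F\sim\mu_F\circ\sigma_k$, since this is where a hypothesis is genuinely needed: if some $\mu_j$ assigns mass zero to a point of $Y$, a braid permutation can carry a positive-measure atom onto a null one and quasi-invariance fails. The finiteness of $Y$ guarantees only finitely many ratios (so $m>0$ and $M<\infty$), and --- crucially when $n=\infty$ --- the ``finite coordinate change'' hypothesis guarantees that $c_{k,A}$ is a genuinely finite product, ruling out the mutual singularity that an infinite product of densities drifting away from $1$ would otherwise force. I would therefore make explicit the standing assumption that each $\mu_j$ has full support on $Y$, under which the bounds above are strict and the proof closes.
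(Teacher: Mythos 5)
Your argument is correct, but it follows a genuinely different route from the paper's. The paper argues by outer approximation: a $\mu$-null set $E$ is covered by disjoint cylinders of total measure less than $\varepsilon$; the cover is split according to which of the finitely many moved coordinates each cylinder fixes and to which values, and the identity $\mu(\pi_k(A))=c_{k,A}\,\mu(A)$, with only finitely many constants $c_{k,J,a}$ occurring, bounds the measure of the image of the cover by $\bigl(1+\sum_{s}\sum_{J,a}c_{k,J,a}\bigr)\varepsilon$; the converse implication is then run separately with $\pi_k^{-1}$. You instead (i) factor $X=X_F\times X_{F^c}$ over the finite block $F$ of moved coordinates, write $\pi_k=\sigma_k\times\mathrm{id}$, and reduce via Fubini to the equivalence of $\mu_F$ and $\mu_F\circ\sigma_k$ on the finite set $X_F$, and (ii) record the uniform two-sided bound $m\,\mu\le\mu\circ\pi_k\le M\,\mu$ on cylinders and propagate it by a monotone-class argument. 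Both routes rest on the same quantitative fact --- the $c_{k,A}$ range over a finite set of positive numbers because $Y$ is finite and only finitely many coordinates move --- but yours yields the stronger conclusion that $\mu$ and $\mu\circ\pi_k$ are equivalent with Radon--Nikodym derivative bounded between $m$ and $M$, and it handles both directions at once. Your explicit flagging of the full-support hypothesis (each $\mu_j$ must charge every point of $Y$) is apt: the paper assumes this silently in the display $\mu(X(a_{i_1},\dots,a_{i_r}))=\prod_{j}\mu_{i_j}(a_{i_j})>0$, and without it the proposition is false. One small caveat: taking $\pi_k=\sigma_k\times\mathrm{id}_{X_{F^c}}$ with $\sigma_k$ a bijection of $X_F$ alone is formally stronger than ``changes only finitely many coordinates'' as literally stated, but it is exactly what the paper's own computation of $c_{k,A}$ presupposes (cylinders map to cylinders with a ratio depending only on the moved coordinates), so your reading puts you on the same footing as the original proof.
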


\begin{proof} We have to prove that $E$ is a set with $\mu(E)=0$ if and only if $\mu(\pi_k(E))=0$ for all  $k=1, \dots, n-1$.

Let $E$ be a set of measure zero. By definition of the $\sigma$-algebra $R$, for each $\varepsilon> 0$, there is a family of disjoint
cylindrical sets $\{E_l\}$ such that $E\subset \bigcup_{l\in \N} E_l$ and $\mu(\bigcup E_l)< \varepsilon $.

Given $k=1, \dots, n-1$, we want to see that $\mu(\pi_k(E))=0$. It is enough to prove that $\mu(\pi_k(\bigcup E_l))<\delta(\varepsilon) $.
Assume that $\pi_k$ changes the places $i_1^k, \dots, i_{r_k}^k$. For each $s\in \{1, \dots, r_k\}$ let $J=J(s)=(j_1, \dots, j_s)$ and
$a=a(s)=(a_1, \dots, a_s)$ be a multi-index such that $j_m \in \{i_1^k, \dots, i_{r_k}^k\}$ and $a^m\in Y$ for all $m$, $1\leq m\leq s$. Let
$F^{J}_{a}$  be the union of all the cylindrical set $E_l$ that fix the places $j_m$ by the value $a^{y_m}$ and let $F$ be  the union of the
remaining cylindrical sets. Then, $$ \begin{aligned} &\mu\left(\pi_k(\bigcup E_l)\right)= \mu \left(\pi_k (F \cup \bigcup_{s}\bigcup_{J,a} F^J_a
)\right)=\mu \left(\pi_k(F) \cup \bigcup_{s} \bigcup_{J,a}\pi_k (F^J_a)\right)= \\ &\qquad \qquad \leq \mu(\pi_k(F)) +\sum_{s} \sum_{J, a}
\mu(\pi_k(F^J_a)) =\mu(F) + \sum_{s} \sum_{ J, a} c_{k,J, a} \mu(F^J_a)\leq \\ & \qquad \qquad  \leq (1+ \sum_{s}\sum_{ J, a}  c_{k,J, a})
\mu\left(\bigcup E_l\right) \end{aligned} $$ As $\sum_{s} \sum_{ J, a} c_{k, J, a}$ is finite, this last expression is arbitrarily small.

With a similar argument, changing the role of $\pi_k$ by $\pi_k^{-1}$, we can shows that if $E$ is a measurable set such that $\mu(E)=0$ then
$\mu(\pi_k^{-1}E)=0$. \end{proof}

\begin{exa} We consider the Lebesgue measure on the interval $[0, 1]$. Let $n=\infty$ and $X$ be the set of sequences $x=(x_1, x_2, \dots)$,
with $x_k\in \{0,1, 2, \dots , r-1\}$ for all $k\in \N$. The map $$ \begin{aligned}
 X\rightarrow &[0, 1] \\
 x\mapsto& \sum_{k=1}^{\infty} x_k r^{-k}
\end{aligned} $$ is a bijection off a countable set. Under it, the Lebesgue measure can be seen as the product measure given by $\prod_k \mu$,
where $\mu(a_j) =\frac{1}{r}$, for all $a_j\in \{0,1, \dots, r-1\}$. By Proposition \ref{medida producto} this mesure is $\pi$-quasi-invariant,
for all action $\pi$ of $\B_\infty$ on $X\approx[0,1]$ with the following property: each $\pi_k$ acts changing only finite coordinates $x_j$ of
$x$. \end{exa}

\subsection{Ergodic Measures} We start with the definition. \begin{defn} Let $\mu$ be a $\pi$-quasi-invariant measure, $\mu$ is said
{\it{$\pi$-ergodic}} if each subset $E$ of $X$ invariant by the action of $\pi$ have measure zero or its complement have measure zero.

Equivalently (see \cite {J} Proposition 11.1.2, or \cite {G} p. 15), $\mu$ is $\pi$-ergodic if every essentially bounded measure function
$\varphi(x)$ on $X$ which is invariant by the action of $\pi$ (that is $\varphi(x) =\varphi(\pi_k(x))$ for almost all $x\in X$ and all
$k$,$1\leq k \leq n-1$, $n\leq \infty$), is constant for almost all $x\in X$. \end{defn}

\begin{defn} Two measures $\mu$ and $\mu'$ defined over the $\sigma$-algebra of subsets of $X$, with $\mu(X)=\mu'(X)=1$, are {\it{disjoint}} if
there is a measurable set $F$ such that $\mu(F)=1$ and $\mu'(F)=0$. \end{defn}

\begin{prop} Let $\mu$ and $\mu'$ be two $\pi$-ergodic measures on $X$ such that $\mu(X)=\mu'(X)=1$. If $\mu'$ is absolutely continuous respect
to $\mu$, then $\mu$ and $\mu'$ are equivalent. If they are not equivalent, then they are disjoint. \end{prop}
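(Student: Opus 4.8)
The plan is to establish the general dichotomy that two $\pi$-ergodic probability measures are either equivalent or disjoint, and to read off both assertions from it. Throughout I would use the \emph{invariant-function} form of ergodicity recorded after the definition: a bounded measurable $\varphi$ with $\varphi=\varphi\circ\pi_k$ almost everywhere for every $k$ is a.e.\ constant. The recurring mechanism is that a null set of a $\pi$-quasi-invariant measure is \emph{essentially $\pi$-invariant}, so ergodicity forces its measure to be $0$ or $1$.

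For the first statement, assume $\mu'\ll\mu$ and let $f=d\mu'/d\mu\ge 0$ be the Radon--Nikodym derivative (both measures are finite, so it exists). Put $Z=\{x:f(x)=0\}$, so that $\mu'(Z)=\int_Z f\,d\mu=0$. First I would show $Z$ is essentially $\pi$-invariant: since $\mu'$ is $\pi$-quasi-invariant, $\mu'(\pi_k Z)=\mu'(\pi_k^{-1}Z)=0$, hence $\int_{\pi_k^{\pm1}Z} f\,d\mu=0$, which (as $f\ge 0$) gives $\pi_k^{\pm1}Z\subseteq Z$ modulo $\mu$-null sets; applying $\pi_k^{-1}$ to $\pi_k Z\subseteq Z$ and invoking the $\pi$-quasi-invariance of $\mu$ upgrades this to $\mu(Z\,\triangle\,\pi_k^{-1}Z)=0$ for all $k$. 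Then $\chi_Z$ is $\pi$-invariant, so ergodicity gives $\mu(Z)\in\{0,1\}$; since $\int_X f\,d\mu=\mu'(X)=1$, we must have $\mu(Z)=0$, i.e.\ $f>0$ $\mu$-a.e. This yields $\mu\ll\mu'$, hence equivalence.

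For the second statement I would introduce $\lambda=\mu+\mu'$, which is again $\pi$-quasi-invariant because its null sets are exactly the common null sets of $\mu$ and $\mu'$. Writing $g=d\mu/d\lambda$ and $f=d\mu'/d\lambda$ (so $f+g=1$ $\lambda$-a.e.), I would consider $B=\{g=0\}$ and $B'=\{f=0\}$, noting $\mu(B)=0$ and $\mu'(B')=0$ by construction. Exactly as above, transporting these null sets by the quasi-invariance of $\mu$ (resp.\ $\mu'$) and of $\lambda$ shows $B$ and $B'$ are essentially $\pi$-invariant. Now I would apply ergodicity twice: $\mu'$-ergodicity gives $\mu'(B)\in\{0,1\}$, and $\mu$-ergodicity gives $\mu(B')\in\{0,1\}$. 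If $\mu'(B)=1$ then $F=B^c$ has $\mu(F)=1$ and $\mu'(F)=0$; if $\mu(B')=1$ then $F=B'$ has $\mu(F)=1$ and $\mu'(F)=0$; either way $\mu$ and $\mu'$ are disjoint. In the only remaining case $\mu'(B)=\mu(B')=0$ we get $\lambda(B)=\lambda(B')=0$, so $f,g>0$ $\lambda$-a.e., whence $\mu\sim\lambda\sim\mu'$. This proves the dichotomy, hence the second statement by contraposition. It also re-proves the first, since disjoint measures cannot satisfy $\mu'\ll\mu$ with $\mu'(X)=1$: then $\mu(F^c)=0$ would force $\mu'(F^c)=0$ and $\mu'(X)=0$.

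The main obstacle will be the essential $\pi$-invariance of the level sets $Z,B,B'$: it requires carefully combining the quasi-invariance of two different measures to promote one-sided inclusions $\pi_k Z\subseteq Z$ (mod null) into genuine invariance $\pi_k^{-1}Z=Z$ (mod null), and then reconciling this \emph{mod-null} invariance with the precise ergodicity hypothesis, which is why I would rely on the invariant-function formulation rather than on strictly invariant sets. The subsidiary facts that $\lambda$ is quasi-invariant and that the Radon--Nikodym derivatives exist (finiteness of the measures) are routine but should be stated explicitly.
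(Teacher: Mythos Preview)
Your argument is correct, but it takes a genuinely different route from the paper. The paper's proof is much shorter and more elementary: it observes that the second statement implies the first, and then proves the second by a direct saturation argument. Assuming $\mu,\mu'$ are not equivalent, pick a set $E$ with $\mu(E)>0$ and $\mu'(E)=0$; since $\mu'$ is $\pi$-quasi-invariant, $\mu'(\pi(\tau)E)=0$ for every $\tau\in\B_n$, and since $\B_n$ is countable the union $F=\bigcup_{\tau\in\B_n}\pi(\tau)E$ is a genuinely $\pi$-invariant measurable set with $\mu'(F)=0$ and $\mu(F)\ge\mu(E)>0$; ergodicity of $\mu$ then forces $\mu(F)=1$, so the measures are disjoint. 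No Radon--Nikodym derivatives, no auxiliary measure $\lambda$, and no delicate passage from one-sided inclusions to essential invariance are needed.

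What each approach buys: the paper's saturation argument is cleaner and exploits the specific setting (countable acting group), producing a strictly invariant set in one step. Your Radon--Nikodym/level-set argument is the standard general-purpose proof of the ergodic dichotomy and would survive in contexts where the orbit saturation is uncountable or the group is not discrete; it also gives the first statement independently, with a transparent reason ($f>0$ a.e.) for the equivalence. In this paper's setting, though, your careful bookkeeping around essential invariance is unnecessary overhead compared to the one-line saturation.
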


\begin{proof} The second statement implies the first one. Assume that $\mu$ and $\mu'$ are not equivalent, then there is a measurable set $E$
with $\mu(E)>0$ and $\mu'(E)=0$. As $\mu'$ is $\pi$-quasi-invariant, $\mu'(\pi_k(E))=0$ for all $k=1, \dots, n-1$. Furthermore,
$\mu'(\pi(\tau)(E))=0$ for all $\tau \in \B_n$. Therefore $F=\bigcup_{\tau\in \B_n} \pi(\tau)(E)$ is a measurable set with $\mu'(F)=0$.

On the other hand $\mu(F)\geq \mu(E)>0$, then we have that $\mu(F)=\mu(X)=1$, since $F$ is invariant by the action $\pi$ and $\mu$ is ergodic.
Therefore $\mu$ and $\mu'$ are disjoint. \end{proof}

\begin{rem} From this proposition on deduce easily that every $\pi$-ergodic discrete measure is concentrated in the orbit of an element $x\in
X$, that is, the points in the orbit of $x$ are the unique points of $X$ of non zero measure. \end{rem}


\section{Irreducibility and Factor Representations}\label{irreduc}

Given the $5$-tuple $(\pi, X, \mu, \nu,U)$, we will analyze the irreducibility of the associated representation $\phi_{(\pi, X, \mu, \nu,U)}$.

\begin{prop} Let $\phi_{(\pi, X, \mu, 1, U)}$ be a representation of $\B_n$ such that for all $k$, $\phi_k:=\phi(\tau_k)$ is a self-adjoint
operator.
 If the measure $\mu$ is $\pi$-ergodic, $\mu(X)=1$ and $U(\tau_k, \pi_k^{-1}x)$ is not a
constant operator, then the representation is irreducible. \end{prop}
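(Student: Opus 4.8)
The plan is to prove irreducibility by showing that every nonzero closed $\phi$-invariant subspace $\CK$ of $\CH$ must equal $\CH$, where $\phi:=\phi_{(\pi,X,\mu,1,U)}$. Since $\nu\equiv 1$ forces $\CH_x=\C$, the space is $\CH=L^2(X,\mu)$ and every operator in the commutative algebra $N$ generated by $\{\phi_k\phi_k^*\}$ is a multiplication operator. First I would exploit self-adjointness: because each $\phi_k:=\phi(\tau_k)$ satisfies $\phi_k^*=\phi_k$, invariance of $\CK$ under $\phi_k$ is the same as invariance under $\phi_k^*$, so the orthogonal projection $P_\CK$ commutes with every $\phi_k$, and hence with all of $N$.

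The second step is to show that $P_\CK$ is diagonalizable, i.e. $P_\CK=M_{\chi_E}$ for some measurable $E\subseteq X$; this is exactly where the non-constancy of $U$ is needed. In the one-dimensional fibre setting one has $\phi_k\phi_k^*=M_{g_k}$ with $g_k(x)=|U(\tau_k,\pi_k^{-1}x)|^2$, so the hypothesis that $U(\tau_k,\cdot)$ is not a constant operator produces a genuinely non-scalar element of $N$. Since condition \ref{ecuacion (3)} holds trivially on one-dimensional fibres, the present $5$-tuple meets the hypotheses of Theorem \ref{teorema 2}, and Corollary \ref{subespacio} applies: the invariant subspace $\CK$ is itself a direct integral over a subset $X_\CK\subseteq X$ with $\nu_\CK(x)\le \nu(x)=1$, which says precisely that $P_\CK$ is multiplication by $\chi_{X_\CK}$.

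Third, I would read the commutation $P_\CK\phi_k=\phi_k P_\CK$ off the explicit formula \ref{represent}. Comparing $(P_\CK\phi_k f)(x)=\chi_{X_\CK}(x)\sqrt{\tfrac{d\mu(\pi_k^{-1}x)}{d\mu(x)}}\,U(\tau_k,\pi_k^{-1}x)f(\pi_k^{-1}x)$ with $(\phi_k P_\CK f)(x)$, and using that both the Radon--Nikodym factor and $U(\tau_k,\pi_k^{-1}x)$ are nonzero almost everywhere (as $\phi_k$ is invertible), yields $\chi_{X_\CK}(\pi_k^{-1}x)=\chi_{X_\CK}(x)$ for almost every $x$ and every $k$. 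Thus $X_\CK$ is invariant under each $\pi_k$, hence under the whole action $\pi$. Finally, since $\mu$ is $\pi$-ergodic and $\mu(X)=1$, the invariant set satisfies $\mu(X_\CK)=0$ or $\mu(X\setminus X_\CK)=0$, i.e. $\CK=\{0\}$ or $\CK=\CH$, proving irreducibility.

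The main obstacle is the second step. Without the non-constancy of $U$ the algebra $N$ collapses to the scalars (the Koopman-type case $U\equiv 1$ with invariant $\mu$ is typical: there $\phi_k$ is the unitary implementing $\pi_k$, the constant function spans a one-dimensional invariant subspace, and the projection onto it is \emph{not} a multiplication operator), so ergodicity alone cannot force $\CK$ to be trivial. The crux is therefore to leverage the non-constancy of $U$ to push $P_\CK$ into the diagonal algebra before the ergodicity dichotomy can be invoked.
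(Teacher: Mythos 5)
Your proposal is correct and follows essentially the same route as the paper: invoke Corollary \ref{subespacio} to realize the invariant subspace $\CK$ as a direct integral over a subset $X_\CK\subseteq X$, use $\nu\equiv 1$ to see each fibre is $0$ or all of $\CH_x$, observe that the resulting set is $\pi$-invariant, and conclude by ergodicity. The only cosmetic difference is that you derive the invariance of $X_\CK$ by reading the commutation $P_\CK\phi_k=\phi_k P_\CK$ off the explicit formula, whereas the paper deduces it from the $\pi$-invariance of the dimension function $\nu_\CK$; your closing remark correctly locates where the non-constancy of $U$ enters, namely in meeting hypothesis (1) of Theorem \ref{teorema 2} so that Corollary \ref{subespacio} applies.
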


\begin{proof} Let $\mathcal{K}\subseteq \CH$ be a closed invariant subspace, then by corollary \ref{subespacio}, $\widetilde{\phi}:=P_\CK \phi
P_\CK = \phi_{(\pi_\CK, X_\CK, \mu_\CK, \nu_\CK, P_\CK U P_\CK)}$, where  $X_\CK \subseteq X$, $\mu_\CK=\mu_{|_{X_\CK}}$ and $\nu_\CK(x)\leq
\nu(x)$.

As $\nu(x)=1$ for almost all $x\in X$ and $\CK_x\subseteq \CH_x$, then $\CK_x=0$ or $\CK_x=\CH_x$, for $x\in X_\CK$. Now, let $A=\{x\in X_\CK :
 \CK_x\neq 0\}=\{x\in X : \nu_{\CK}(x)=1\}\subseteq X$. It is a measurable set because $\nu_{\CK}$ is measurable.
 Moreover, it is invariant by $\pi$, since $\nu_\CK$ is $\pi$-invariant. As $\mu$ is $\pi$-ergodic, $\mu(A)=0$ or $\mu(A)=1$. Then $\CK=0$ or
 $\CK=\CH$.
\end{proof}


\begin{defn} A representation $\phi$ is a {\it{factor representation}} if the von Neumann algebra $M$ generated by $\{\phi(\tau_k)\}_k$ is a
factor, that is, if the commutator $M'$ contains just the operators $\lambda 1_M$, with $\lambda \in \C$. \end{defn}

If $M$ is the von Neumann algebra generated by an irreducible representation $\phi$, then by Schur's Lemma, $M'=\C 1_{\CH}$, that is $\phi$ is a
factor representation. The converse is only true if the representation is unitary.

From  von Neumann algebras theory it is known that every representation can be decomposed, in a unique way, as sum or direct integral of factor
representations.

\begin{prop} If $\phi_{(\pi, X, \mu, \nu, U)}$ is a factor representation, then $\mu$ is $\pi$-ergodic. \end{prop}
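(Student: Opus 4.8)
The plan is to prove the contrapositive: if $\mu$ fails to be $\pi$-ergodic, then $\phi:=\phi_{(\pi,X,\mu,\nu,U)}$ cannot be a factor representation. I would exploit the functional characterization of ergodicity recalled above, namely that non-ergodicity of $\mu$ produces an essentially bounded measurable function $\varphi$ on $X$ that is $\pi$-invariant ($\varphi\circ\pi_k=\varphi$ a.e. for all $k$) but not constant a.e. The goal is then to convert such a $\varphi$ into a nontrivial element of the center of the von Neumann algebra $M$ generated by $\{\phi(\tau_k)\}_k$.

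First I would attach to $\varphi$ the diagonalizable operator $M_\varphi$. The crucial computation is already available: relation \ref{propiedad (1)} from the proof of Theorem \ref{teorema 2} gives $\phi_k M_\varphi=M_{\varphi\circ\pi_k^{-1}}\phi_k$. Because $\varphi$ is $\pi$-invariant and each $\pi_k$ is invertible, we have $\varphi\circ\pi_k^{-1}=\varphi$, so $M_\varphi$ commutes with every generator $\phi_k$ and therefore $M_\varphi\in M'$.

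Next I would show $M_\varphi\in M$ as well. Since $M$ is a $*$-algebra, each $\phi_k\phi_k^*$ lies in $M$, hence the von Neumann algebra $N$ they generate satisfies $N\subseteq M$. By Remark \ref{integral directa} together with the construction of $X$ in Theorem \ref{teorema 2}, $N$ is precisely the algebra of all diagonalizable operators $\{M_\psi:\psi\in L^\infty(X,\mu)\}$, because its generating projections $P_{k,l}=M_{\chi_{X_{k,l}}}$ generate the whole $\sigma$-algebra of $X$. Consequently $M_\varphi\in N\subseteq M$. Combining the two inclusions, $M_\varphi\in M\cap M'$, the center of $M$; and since $\varphi$ is not a.e. constant, $M_\varphi\neq\lambda 1_\CH$. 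Thus the center is nontrivial and $\phi$ is not a factor representation, establishing the contrapositive.

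The two commutation checks are routine; the step that deserves care, and the main obstacle, is the identification of $N$ with the full diagonal algebra, i.e. verifying that an arbitrary $\pi$-invariant $\varphi\in L^\infty(X,\mu)$ really produces an $M_\varphi$ lying in $M$ (equivalently in $N$). This hinges on the fact that the $\sigma$-algebra on $X$ is generated by the sets $X_{k,l}$, so that the spectral projections of the operators $\phi_k\phi_k^*$ generate every diagonalizable operator; I would make this dependence on the construction explicit, so that no invariant function escapes the argument.
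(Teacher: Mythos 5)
Your proposal is correct, and its core is the same as the paper's: the identity $\phi_k M_{\varphi}=M_{\varphi\circ\pi_k^{-1}}\phi_k$ (relation \ref{propiedad (1)}, a consequence of \ref{condicion (3)}) combined with the $\pi$-invariance of $\varphi$ shows $M_{\varphi}\in M'$. The paper argues directly rather than by contraposition, and then stops there: with its stated definition of factor representation, namely $M'=\C\,1_{\CH}$, the membership $M_{\varphi}\in M'$ already forces $M_{\varphi}=\lambda 1_{\CH}$, hence $\varphi$ constant a.e., and ergodicity follows from the functional characterization. Your extra step --- showing $M_{\varphi}\in M$ via $N\subseteq M$ and the identification of $N$ with the full algebra of diagonalizable operators (which does hold here, since the $\sigma$-algebra on $X$ is by construction generated by the sets $X_{k,l}$ and $N$ is generated by the projections $P_{k,l}=M_{\chi_{X_{k,l}}}$) --- is therefore not needed for the proposition as the paper states it, but it is sound and buys you something: it places $M_{\varphi}$ in the center $M\cap M'$, so your argument proves the conclusion under the weaker, standard hypothesis that $M\cap M'=\C\,1_{\CH}$, not just under the paper's stronger hypothesis $M'=\C\,1_{\CH}$. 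In short: same key computation, contrapositive packaging, and a genuine (correct) strengthening of the hypothesis under which the result holds.
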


\begin{proof} Let $\varphi$ be an essentially bounded measurable function such that $\varphi(x)=\varphi(\pi_k x)$. By \ref{condicion (3)} we
have that $$ \begin{array}{rl} (\phi_k M_{\varphi}f)(x)= &(M_{\varphi \circ \pi_k^{-1}}\phi_k f)(x)=\varphi(\pi_k^{-1}x)(\phi_kf)(x)\\
                     =  &\varphi (\pi_k^{-1}x) U(\tau_k, \pi_k^{-1}x) \sqrt{\frac{d\mu
                     (\pi_k^{-1} x)}{d\mu(x)}} f(\pi_k^{-1}x)\\
                     = & \varphi (x) U(\tau_k, \pi_k^{-1}x) \sqrt{\frac{d\mu
(\pi_k^{-1} x)}{d\mu (x)}} f(\pi_k^{-1}x)\\
                     =& (M_{\varphi} \phi_k f)(x)
\end{array} $$ Then $M_{\varphi}\in M'$. This means that $M_{\varphi}=\lambda 1_{\CH}$, and $\varphi(x)=\lambda$ for almost all $x\in X$.
\end{proof}

\begin{cor} If $\phi:=\phi_{(\pi, X, \mu, \nu, U)}$ is a factor representation and $\nu$ is an essentially bounded measurable function, then
$\nu$ is constant. \end{cor}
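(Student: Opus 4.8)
The plan is to chain the immediately preceding Proposition together with the alternative characterization of $\pi$-ergodicity and the built-in $\pi$-invariance of the dimension function. The proof will be short, since all three ingredients are already available in the excerpt.

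First I would invoke the Proposition proved just above, which states that if $\phi_{(\pi, X, \mu, \nu, U)}$ is a factor representation then $\mu$ is $\pi$-ergodic. Thus the factor hypothesis immediately supplies ergodicity of $\mu$, and this is the only place the ``factor'' assumption is used. Next I would recall that the dimension function $\nu$ is always $\pi$-invariant: this is part of the defining data of the $5$-tuple (condition (c) of Theorem \ref{teorema 2}, equivalently the $\pi$-invariance demanded in Theorem \ref{teorema 1}). Hence $\nu(x) = \nu(\pi_k x)$ for almost every $x \in X$ and every generator $\tau_k$, $1 \leq k \leq n-1$.

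Finally, using the hypothesis that $\nu$ is essentially bounded and measurable, I would apply the equivalent formulation of $\pi$-ergodicity stated in the definition: when $\mu$ is $\pi$-ergodic, every essentially bounded, measurable, $\pi$-invariant function on $X$ is constant for almost every $x$. Since $\nu$ is measurable (as part of the $5$-tuple), $\pi$-invariant (by the previous paragraph), and essentially bounded (by assumption), all hypotheses of that criterion are met, and I would conclude that $\nu$ is constant almost everywhere, which is exactly the assertion.

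I do not anticipate a genuine obstacle. The one point deserving explicit attention is that the essential boundedness hypothesis is precisely what allows the integer-valued function $\nu$ to fall within the scope of the ergodicity criterion (which is phrased for essentially bounded functions), and that the notion of $\pi$-invariance of $\nu$ coincides with the invariance $\varphi(x) = \varphi(\pi_k x)$ required there. Once these identifications are made, the conclusion is immediate.
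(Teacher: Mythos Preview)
Your proposal is correct and follows exactly the same approach as the paper: use the preceding Proposition to obtain $\pi$-ergodicity of $\mu$, then apply the characterization of ergodicity to the $\pi$-invariant essentially bounded measurable function $\nu$ to conclude it is constant. The paper's proof is simply a more terse version of what you wrote.
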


\begin{proof} As $\phi$ is a factor representation, then $\mu$ is $\pi$-ergodic. This means that every $\pi$-invariant essentially bounded measurable function is a constant function. So is $\nu$.
 \end{proof}

Note that, under the condition of the corollary, $\nu(x)=a<\infty$ for all $x\in X$.

\begin{prop}\label{rep.irred} Let $\phi_{(\pi, X, \mu, \nu, U)}$ be an irreducible representation of $\B_n$, $n\leq \infty$, where the $5$-tuple
$(\pi, X, \mu, \nu, U)$ satisfies the conditions of the example ~\ref{sencillas} and $\mu$ is a $\pi$-invariant finite measure. If $\nu$ is an
essentially bounded measurable function then $\nu(x)=1$. \end{prop}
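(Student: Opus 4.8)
The plan is to combine the two previous results on factor representations with the explicit normal form of the cocycle coming from Example~\ref{sencillas}, and then, assuming the fibre dimension exceeds one, to contradict irreducibility by producing a proper invariant subspace.

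First I would record the reductions that are already available. Since $\phi:=\phi_{(\pi,X,\mu,\nu,U)}$ is irreducible, Schur's Lemma gives $M'=\C 1_{\CH}$; in particular $\phi$ is a factor representation. By the Proposition preceding this one, $\mu$ is then $\pi$-ergodic, and by the Corollary, since $\nu$ is essentially bounded and measurable, $\nu$ is constant: there is $a\in\N$ with $\nu(x)=a$ for almost every $x$, so $\CH_x=\C^a$ a.e.\ and $\CH=\int_X^\oplus \C^a\,d\mu(x)$. The goal is to show $a=1$, so I assume $a\geq 2$ and seek a contradiction. I would then put the generators in normal form: because $\mu$ is $\pi$-invariant (normalize $\mu(X)=1$) the Radon--Nikodym factors in \ref{represent} equal $1$, and since $\pi_k^{-1}=\pi_k$ in Example~\ref{sencillas} one gets $(\phi_k f)(x)=u_k(x)f(\pi_k x)$ with $u_k(x):=U(\tau_k,\pi_k x)\in M_a(\C)$. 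Writing $W_k$ for the Koopman operator $(W_kf)(x)=f(\pi_k x)$, a self-adjoint unitary because $\pi_k$ is a measure-preserving involution, and $U_k'$ for multiplication by the field $u_k$, we have $\phi_k=U_k'W_k$.

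Next I would analyze the commutant through decomposable operators. An operator $T$ with $(Tf)(x)=T(x)f(x)$ lies in $M'$ iff $T(x)=u_k(x)\,T(\pi_k x)\,u_k(x)^{-1}$ for all $k$ and a.e.\ $x$; a constant field $T(x)\equiv S$ qualifies exactly when $S$ commutes with every $u_k(x)$. This already handles one case: if the matrices $u_k(x)$ are all scalar on a set of full measure, then any non-scalar $S\in M_a(\C)$ gives $S\in M'$, contradicting $M'=\C 1_\CH$ when $a\geq 2$. In the remaining case I would use the commutation relations (3) of Example~\ref{sencillas}, together with the inductive identity $u_{k+1}(x)=u_k(\pi_{k+1}\pi_k x)$ from (1), which force many fibre matrices to commute (for example $u_k(x)$ commutes with $u_{k+1}(x)$ and with $u_k(\pi_j x)$ for $|j-k|>1$). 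Exploiting this, I would select, measurably and $\pi$-equivariantly, a proper nonzero subspace field $x\mapsto L_x\subsetneq \C^a$ satisfying $u_k(x)L_{\pi_k x}=L_x$ for all $k$. The field then defines $\CK=\int_X^\oplus L_x\,d\mu(x)$, a proper closed subspace that is $\phi$-invariant by construction. Since $\dim L_x$ is a bounded, measurable, $\pi$-invariant function, ergodicity of $\mu$ makes it constant with $0<\dim L_x<a$ on a full-measure set, so $\CK$ is a nonzero proper invariant subspace, contradicting irreducibility. Hence $a=1$.

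The step I expect to be the main obstacle is the construction of the sub-field $x\mapsto L_x$ in the non-scalar case: the relations of Example~\ref{sencillas}(3) only assert commutativity of specific clusters of the $u_k$ at shifted base points, not of the entire family $\{u_k(x):k,x\}$ simultaneously, so I must verify that a joint proper invariant subspace can be chosen compatibly across all generators and depends measurably on $x$. The two hypotheses that are doing the real work here are the $\pi$-invariance of $\mu$, which makes each $W_k$ unitary and hence $\phi_k=U_k'W_k$ a genuinely ``monomial'' generator, and the ergodicity of $\mu$, which is what upgrades a fibrewise proper invariant subspace into an honest proper invariant subspace of $\CH$.
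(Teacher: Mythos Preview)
Your reductions (irreducible $\Rightarrow$ factor $\Rightarrow$ $\mu$ ergodic $\Rightarrow$ $\nu\equiv a$) and your normal form $(\phi_k f)(x)=u_k(x)f(\pi_k x)$ with $u_k(x)=U(\tau_k,\pi_k x)$ match the paper exactly. The gap is precisely the step you flag yourself: you never actually build the field $x\mapsto L_x$. Everything hinges on that construction, and ``I would select, measurably and $\pi$-equivariantly, a proper nonzero subspace field'' is a restatement of what has to be proved, not a proof. Your Case~1 (all $u_k(x)$ scalar, so any non-scalar $S$ lies in $M'$) is fine but becomes unnecessary once the main construction is done.

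The paper closes the gap in a very concrete way that you are one observation away from. You already wrote down $u_{k+1}(x)=u_k(\pi_{k+1}\pi_k x)$ from condition~(1) and noted that condition~(3) makes $u_k(x)$ commute with $u_k(\pi_{k+1}\pi_k x)$; combining these gives $[u_k(x),u_{k+1}(x)]=0$ for every $k$ and every $x$. The paper then asserts that \emph{at each fixed $x$} the whole family $\{u_k(x)\}_k\subset M_a(\C)$ is commuting, hence simultaneously triangularizable, so there is a common eigenvector $v_x\in\C^a$. Now pick one representative $y_l$ per $\pi$-orbit, choose a unit common eigenvector $v_{y_l}$, and set $f(x)=v_{y_l}$ whenever $x$ lies in the orbit of $y_l$; $f\in\CH$ because $\mu(X)<\infty$. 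Then $(\phi_k f)(x)=\lambda_k(x)f(x)$ and likewise $(\phi_k^* f)(x)=\overline{\lambda_k(x)}f(x)$, so the cyclic subspace $\CK$ generated by $f$ is invariant under all $\phi_k$ and $\phi_k^*$; by Corollary~\ref{subespacio} it is a direct integral with one-dimensional fibres. If $a\ge 2$ one picks $w_{y_l}$ independent of $v_{y_l}$ and builds a second direct integral $\widetilde\CK$ with $\widetilde\CK_x\cap\CK_x=0$, forcing $\CK\subsetneq\CH$ and contradicting irreducibility. In short: the ``subspace field'' you are looking for is just the common eigenline, and no separate measurability or equivariance argument is needed because $f$ is taken \emph{constant on orbits}. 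That single move replaces your abstract selection problem with a one-line linear-algebra fact.
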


\begin{proof} As the representation is irreducible, it is a factor. Then, the measure is ergodic. Therefore $\nu(x)$ is constant. We suppose
that $\nu(x)=r<\infty$ for almost all $x$. By conditions of the example \ref{sencillas}, $U(\tau_k, \pi_k x)$ commute with $U(\tau_k, \pi_k
\pi_{k+1} \pi_k x)=U(\tau_{k+1}, \pi_{k+1} x)$ for all $k$, $1\leq k\leq n-2$. Hence, the operators $\{U(\tau_k, \pi_k x)\}_k$ commute.
Therefore, they simultaneously triangularize for each $x$. Then there is $v_x\in \CH_r$ such that $U(\tau_k, \pi_k x)v_x=\lambda_k (x) v_x$, for
every $k$.  We choose an element in each class of $X/\B_n$, $y_1, y_2, \dots $, and unitary vectors $v_{y_1}, v_{y_2}, \dots \in \CH_r$ such
that $U(\tau_k, \pi_k y_l)v_{y_l}=\lambda_k(y_l) v_{y_l}$, for each $l=1,2, \dots$. We define the vector function $x\rightarrow f(x)$ such that
$f(y)=v_{y_l}$ if $y=\pi(\tau)(y_l)$, for some $\tau\in \B_n$. Then, $f\in \CH$ because $\int_X\abs{f(x)}^2d\mu(x) = \mu(X)<\infty $. It
 verifies that
$$ (\phi_kf)(x)=\lambda_k (x) f(x) $$

Then, since $f$ is non zero,  the closed subspace $\mathcal{K}$ generated by $f$ is a non zero invariant subspace. Moreover, $\CK$ is invariant
by $\phi_k^*$ for all $k$, because $$ (\phi_k^*f)(x)=U(\tau_k, x)^*f(\pi_k x)=\overline{\lambda_k (x)} f(x) $$ Thus, by corollary
\ref{subespacio}, $\CK=\int_{X_\CK} \CK_x d\mu_\CK(x)$. Where $\nu_\CK(x)=1$ for almost all $x\in X_\CK$.

If $r=\nu(x)>1$, there is $w_{y_l}\in \CH_{y_l}$, linearly independent  to $v_{y_l}$. Let $\mathcal{\widetilde{K}}= \int_X^{\oplus} \C w_x\
d\mu$, where $w_x:=w_{y_l}$ if $x=\pi(\tau) (y_l)$, for some $\tau \in \B_n$. $\mathcal{\widetilde{K}}$ is a closed subspace of $\CH$ such that
$\widetilde{\mathcal{K}}\cap \mathcal{K}= 0$ because $\widetilde{\mathcal{K}}_x\cap \mathcal{K}_x = 0$ for almost all $x\in X$. Therefore
$\mathcal{K}\neq \CH$ which is a contradiction since the representation is irreducible. Then $\nu(x)=1$.
\end{proof}

\begin{prop} Let $\rho$ be
an irreducible self-adjoint representations of $\B_n$ on a  vector space $V$ of dimension $m\leq \infty$. Let $(\pi, X, \mu, \nu, U)$ be a
$5$-tuple where $\mu$ is a $\pi$-ergodic invariant measure, $\nu(x)=m$ and $U(\tau_k, x):=\rho (\tau_k)$ for almost all $x\in X$. Then
$\phi_{(\pi, X, \mu, \nu, U)}$ is an irreducible representation of $\B_n$.
\end{prop}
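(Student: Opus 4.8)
The plan is to realize $\phi:=\phi_{(\pi,X,\mu,\nu,U)}$ concretely and then show that any nonzero closed invariant subspace equals $\CH$. Since $\nu\equiv m$, the direct integral is $\CH=\int_X^{\oplus}V\,d\mu=L^2(X,\mu;V)$. Because $\mu$ is invariant the Radon--Nikodym factor in \ref{represent} is $1$, and because $U(\tau_k,\cdot)=\rho(\tau_k)$ is constant the action reads $(\phi_k f)(x)=\rho(\tau_k)f(\pi_k^{-1}x)$, i.e. $\phi_k=\lambda_k\otimes\rho(\tau_k)$ with $(\lambda_k g)(x)=g(\pi_k^{-1}x)$ unitary on $L^2(X,\mu)$. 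First I would record that $U=\rho$ is genuinely a cocycle: for a constant $U$ the identities \ref{ecuacion 1}--\ref{ecuacion 2} collapse exactly to the braid relations satisfied by $\rho$, so $\phi$ is a representation by Theorem \ref{teorema 1}. A short computation with the inner product of $\CH$ gives $(\phi_k^*f)(x)=\rho(\tau_k)f(\pi_k x)$ and hence $\phi_k\phi_k^*=1\otimes\rho(\tau_k)^2$; thus conditions $(1)$--$(3)$ of Theorem \ref{teorema 2} hold, and when the $\pi_k$ are involutions (as in all the examples of Section \ref{Ejemplos}) one has $\phi_k^*=\phi_k$, which is what lets Corollary \ref{subespacio} be invoked.

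Next, let $\CK\subseteq\CH$ be a nonzero closed invariant subspace; the goal is $\CK=\CH$. By Corollary \ref{subespacio}, $\CK=\int_{X_\CK}^{\oplus}\CK_x\,d\mu$ is itself a direct integral with $X_\CK\subseteq X$, $\mu_\CK=\mu|_{X_\CK}$ and fibre dimension function $\nu_\CK(x)=\dim(\CK\cap\CH_x)\le\nu(x)=m$. Extending $\nu_\CK$ by $0$ on $X\setminus X_\CK$, it becomes a measurable, essentially bounded, $\pi$-invariant function on $X$. Since $\mu$ is $\pi$-ergodic, $\nu_\CK$ is constant, say $\nu_\CK\equiv c$ with $c\in\{0,1,\dots,m\}$.

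It remains to show $c\in\{0,m\}$. If $c=0$ then $\CK=0$, and if $c=m$ then $\CK_x=\CH_x$ for almost every $x$, whence $\CK=\CH$; so everything rests on excluding $0<c<m$. Here I would translate invariance of $\CK$ into the fibrewise relation $\rho(\tau_k)\,\CK_{\pi_k^{-1}x}=\CK_x$ for almost every $x$ and every $k$, obtained by comparing $(\phi_k f)(x)=\rho(\tau_k)f(\pi_k^{-1}x)$ with the decomposition $\CK=\int^{\oplus}\CK_x$. This exhibits the measurable field $F\colon x\mapsto\CK_x$ as a $\rho$-\emph{equivariant} map from the ergodic $\B_n$-space $(X,\pi,\mu)$ into the Grassmannian $\mathrm{Gr}(c,V)$, i.e. $F(\pi_k x)=\rho(\tau_k)\cdot F(x)$. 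The aim is to deduce that $F$ is essentially constant, equal to a single subspace $W\subseteq V$ with $\dim W=c$; then $\rho(\tau_k)W=W$ for all $k$ contradicts the irreducibility of $\rho$, since $0<c<m$.

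The main obstacle is precisely this last reduction. Plain $\pi$-ergodicity forces ordinary invariant functions to be constant, but $F$ is invariant only \emph{up to the twist by} $\rho(\tau_k)$, so ergodicity does not apply to $F$ directly; the difficulty is genuine, since for other groups (e.g. $\B_n/[\B_n,\B_n]\cong\Z$ acting by an irrational rotation) twisted-invariant line fields need not be constant. The crux is therefore to combine the ergodicity of $\mu$ with the irreducibility of $\rho$ so as to kill the twist---for instance by passing to the induced line field $x\mapsto\Lambda^c\CK_x\in\mathbb{P}(\Lambda^c V)$ under $\Lambda^c\rho$, or by showing that constancy of the integer $c$ together with the self-adjointness of $\rho$ makes the projection field $x\mapsto P_{\CK_x}$ genuinely $\pi$-invariant and hence, by ergodicity, constant. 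Establishing this constancy is the step I expect to demand the most care.
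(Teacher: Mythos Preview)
Your reduction via Corollary~\ref{subespacio} to the fibrewise equivariance $\rho(\tau_k)\,\CK_{\pi_k^{-1}x}=\CK_x$ is correct, and so is your diagnosis that plain ergodicity does not force the Grassmannian-valued map $x\mapsto\CK_x$ to be constant, because the invariance is twisted by $\rho$. However, the fixes you propose do not close the gap: passing to $\Lambda^c\rho$ merely replaces one twisted equivariance by another on $\mathbb{P}(\Lambda^c V)$, and the projection field $x\mapsto P_{\CK_x}$ satisfies $P_{\CK_x}=\rho(\tau_k)\,P_{\CK_{\pi_k^{-1}x}}\,\rho(\tau_k)^{-1}$, which is again only conjugation-equivariance, not $\pi$-invariance, so ergodicity still does not apply. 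Your own irrational-rotation remark shows that some extra input is genuinely required; self-adjointness of $\rho$ by itself does not supply it.

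The paper avoids this difficulty by reversing the order in which irreducibility and ergodicity are invoked. Instead of first trying to make $x\mapsto\CK_x$ constant and only then using irreducibility, it argues that each individual fiber $\CK_y$ is already $\rho$-invariant: for $v\in\CK_y$ one exhibits a section $f_k\in\CK$ supported at the single point $\pi_k^{-1}y$ with value $v$ there, and computes $(\phi_k f_k)(y)=\rho(\tau_k)v$, so $\rho(\tau_k)v\in\CK_y$. Irreducibility of $\rho$ then forces $\CK_y\in\{0,V\}$ for every $y$, so $\nu_\CK$ is $\{0,m\}$-valued; the set $\{x:\nu_\CK(x)=0\}$ is now \emph{genuinely} $\pi$-invariant---the twist has disappeared because $0$ and $V$ are $\rho$-fixed points of the Grassmannian---and ergodicity finishes the proof. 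The idea you are missing is exactly this inversion: use irreducibility of $\rho$ fiberwise first, collapsing the Grassmannian to two points, and only afterwards appeal to ergodicity on the resulting Boolean-valued function.
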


\begin{proof} Let $\CK$ be an invariant closed subspace of $\CH:=\int_X V d\mu(x)$. As $\phi_{(\pi, X, \mu, \nu, U)}$ is self-adjoint, by
Corollary  \ref{subespacio}, $\CK=\int_{X_\CK} \CK_x d\mu_\CK(x)$, where $X_\CK\subset X$, $\CK_x$ is a subspace of $V$ and
$\mu_\CK=\mu_{|_{X_\CK}}$. Given $y\in X_\CK$, we have to see that $\CK_y$ is an invariant subspace by $\rho$. Let $v\in \CK_y$ be a non zero
vector, let us see that for all $k$, $1\leq k\leq n-1$, $\rho(\tau_k)v \in \CK_y$. For each $k$, let $f_k\in \CK$ defined by $$ f_k(x)=\left\{
\begin{array}{ll}
              v        &  \text{    if    } \pi_k x=y \\
              0        & \text{    if    }  \pi_k x\neq y
\end{array}\right. $$ Then, as $\CK$ is invariant by $\phi$, we have that
$$ (\phi_k f_k)(x)= U(\tau_k, \pi_k^{-1} x) f_k(\pi_k^{-1} x)=\left\{
\begin{array}{ll}
              \rho(\tau_k)v        &  \text{    if    } \pi_k \pi_k^{-1} x=y \\
              0                   & \text{    if    }  \pi_k \pi_k^{-1} x\neq y
\end{array}\right. $$
But $ (\phi_k f)(x)\in \CK_x$ for almost all $x$ since $\CK$ is invariant. Hence, $\rho(\tau_k)v \in \CK_y$ as we wanted.
Therefore $\CK_x$ is invariant for  almost all $x\in X_\CK$, then $\CK_x=0$ or $V$ because $\rho$ is irreducible. Now, let $A=\{x\in X_\CK :
\CK_x= 0\}=\{x\in X : \nu_{\CK}(x)=0\}\subseteq X$. It is a measurable set because $\nu_{\CK}$ is measurable. Moreover, it is
invariant by $\pi$, since $\nu_\CK$ is $\pi$-invariant. As $\mu$ is $\pi$-ergodic, $\mu(A)=0$ or $\mu(A)=1$. Then $\CK=\CH$ or $\CK=0$.
\end{proof}

\begin{rem} There are representations of $\B_n$ ($n\leq\infty$), that  are not completely reducible.

We construct the following example using Theorem \ref{teorema 1}. Let $X=\{(x_1, \dots, x_n): x_i=0, 1\}$, $\pi_k$ acting on each $x\in X$ by
permutation of its coordinates $x_k$ and $x_{k+1}$, $\nu(x)=2$ and $U(\tau_k, x)=A$ for almost all $x\in X$ and for all $k$, where $A$ is the
following matrix $$ A= \left(\begin{array}{cc}
            1    &1   \\
            0    &1
\end{array}\right) $$ The operator $U(\tau_k, \pi_k x)$ satisfy the relations of the example \ref{sencillas}. If $f(x)\equiv (1, 0)$ then the
closed subspace $\mathcal{K}$ generated by $f$ is invariant. Moreover, it is invariant by $\phi_k^*$, then, by corollary \ref{subespacio}, $\CK=
\int_{X_\CK} \CK_x d\mu_{\CK}(x)$. But $\CK$ has not invariant complements. Indeed, suppose that there exits a closed subspace
$\mathcal{W}\subset \CH$, invariant by $\{\phi_k\}$, such that $\mathcal{W}\oplus \mathcal{K} =\CH$. We may suppose that $\CW$ is orthogonal to
$\CK$, in the contrary, we change $\CW$ by $\widetilde{\CW}=\{w-P_{\CK}w : w\in \CW\}$, where $P_{\CK}$ is the orthogonal projection over $\CK$.
Then $\widetilde{\CW}$ is an orthogonal invariant complement of $\CK$.

Moreover $\CW$ is invariant for $\phi_k^*$, then $\phi_k$ commute with $P_{\CW}$, the orthogonal projection over $\CW$. Therefore
$\widetilde{\phi_k}:=P_{\CW} \phi_k P_{\CW}$ satisfy the hypothesis of the Theorem \ref{teorema 2}. Hence, $\mathcal{W}=\int_X \mathcal{W}_x
d\mu(x)$, with $\mathcal{W}_x \subset \CH_x$ and $\mathcal{W}_x \oplus \mathcal{K}_x=\CH_x$ for almost all $x$. Then $\dim \mathcal{W}_x=1$ for
almost all $x$, since $\nu(x)=2$ and $\dim \mathcal{K}_x=1$. This is a contradiction since $A$ has a unique eigenspace of dimension $1$.
\end{rem}

\end{document}